\theoremstyle{definition}
\newtheorem{theorem}{Theorem}[section]
\newtheorem{proposition}[theorem]{Proposition}
\newtheorem{lemma}[theorem]{Lemma}
\newtheorem{definition}[theorem]{Definition}
\newtheorem{example}[theorem]{Example}
\newtheorem{remark}[theorem]{Remark}
 \numberwithin{equation}{section}
\numberwithin{equation}{section}
\newcommand{\di}{\displaystyle}
\definecolor{Purple}{RGB}{100,0,150}
\definecolor{Green}{RGB}{0,120,20}
\newcommand{\fa}{$\hspace{0cm}$}
\begin{document}
  
\title[Rademacher's Formula]{  Rademacher's Formula for the Partition Function}
\author{Ze-Yong Kong and Lee-Peng Teo}
\address{Department of Mathematics, Xiamen University Malaysia\\Jalan Sunsuria, Bandar Sunsuria, 43900, Sepang, Selangor, Malaysia.}
\email{MAT2004438@xmu.edu.my, lpteo@xmu.edu.my}
\begin{abstract}
For a positive integer $n$, let $p(n)$ be the number of ways to express $n$ as a sum of positive integers. 
In this note, we revisit the  derivation of the Rademacher's convergent series for $p(n)$ in a pedagogical way, with all the   details given. We also derive the leading asymptotic behavior of $p(n)$ when $n$ approaches infinity. Some numerical results are tabled.
\end{abstract}
\maketitle
\section{Introduction}
Given a positive integer $n$, let $p(n)$ be the number of ways to express $n$ as a sum of positive integer. 
For example, when $n=7$,
the followings are the different ways to express 7 as a sum of positive integers.
\begin{gather*}
\textcolor{red}{7}
\\
\textcolor{Purple}{6+1}, \;\textcolor{Green}{ 5+2}, \;\textcolor{Purple}{4+3}\\
\textcolor{red}{5+1+1},\; \textcolor{blue}{4+2+1}, \;\textcolor{red}{3+3+1},\; \textcolor{blue}{3+2+2}\\
\textcolor{Purple}{4+1+1+1}, \;\textcolor{Green}{3+2+1+1}, \;\textcolor{Purple}{2+2+2+1}\\
\textcolor{red}{3+1+1+1+1}, \;\textcolor{blue}{2+2+1+1+1}\\
\textcolor{Purple}{2+1+1+1+1+1}\\
\textcolor{Green}{1+1+1+1+1+1+1}
\end{gather*}There are 15 of them. Hence, $p(7)=15$.

Let $p(0)=1$. Then the generating function of the sequence $\{p(n)\}$ is
\begin{align}\label{eq3_23_4}F(x)=\sum_{n=0}^{\infty}p(n)x^n=\prod_{m=1}^{\infty}(1-x^m)^{-1}.\end{align}

The calculation of $p(n)$ using the generating function is inefficient. A more efficient method is to make use of the Euler pentagonal number theorem, which states that 
\[\prod_{m=1}^{\infty}(1-x^m)=1+\sum_{n=1}^{\infty}(-1)^n\left(x^{\omega_1(n)}+x^{\omega_2(n)}\right),\]
where
\[\omega_1(n)=\sum_{k=0}^{n-1}(1+3k)=\frac{3n^2-n}{2} \] are the pentagonal numbers, and \[\omega_2(n)=\omega_1(-n)=\frac{3n^2+n}{2}.\]

From this, one can deduce the following recursive formula to calculate $p(n)$:
\begin{equation}\label{eq230113_1}
p(n)=\sum_{\substack{k\geq 1\\w_2(k)\leq n}}(-1)^{k+1}\left\{p(n-\omega_1(k)+p(n-\omega_2(k)\right\}.\end{equation}
Notice that for all $n\geq 1$,
\[\omega_2(n)-\omega_1(n)=n,\hspace{1cm}\omega_1(n+1)-\omega_2(n)=2n+1.\]
This implies that
\[\omega_1(n)<\omega_2(n)<\omega_1(n+1).\]The sum on the right hand side of \eqref{eq230113_1} only involves approximately $\di 2\sqrt{2n/3}$ terms. Hence, it is an effective formula to calculate $p(n)$. By using $\omega_1(k)$ and $\omega_2(k)$ up to $k=100$, one can calculate all $p(n)$ for $1\leq n\leq 15050$. Selected values of $p(n)$ for $n$ in this range  are tabled in Table \ref{table2}.

In 1918, Hardy and Ramanujan \cite{Hardy1918} showed that $p(n)$ satisfies the asymptotic formula:
$$p(n)\sim \frac{1}{4n\sqrt{3}}\exp\left(\pi\sqrt{\frac{2n}{3}}\right)\hspace{1cm} \text{when}\;n\rightarrow\infty.$$ More precisely, if we let
\begin{align}\label{eq2_15_1}P_1(n)= \frac{1}{4n\sqrt{3}}\exp\left(\pi\sqrt{\frac{2n}{3}}\right),\end{align} then
$$\lim_{n\rightarrow\infty}\frac{p(n)}{P_1(n)}=1.$$This was discovered independently by Uspensky \cite{Uspensky1920} in 1920. 

In fact, in \cite{Hardy1918}, Hardy and Ramanujan obtained the following aymptotic formula
\begin{align}\label{eq2_15_2}p(n)=\sum_{k<\alpha\sqrt{n}}P_k(n)+O\left(n^{-1/4}\right),\end{align} where $\alpha$ is a positive constant, and $P_1(n)$ is the leading term given by \eqref{eq2_15_1}. The terms $P_2(n)$, $P_3(n)$, $\cdots$, have similar forms, but with a constant smaller than $\di\pi\sqrt{2/3}$ in the exponential term. 

A drawback of the asymptotic formula \eqref{eq2_15_2} is that the infinite series
$$\sum_{k=1}^{\infty} P_k(n)$$ is divergent, as was proved by Lehmer \cite{Lehmer1937}.

In 1937, when  Rademacher  prepared lecture notes on the work of Hardy and Ramanujan, he made an improvement on the asymptotic formula \eqref{eq2_15_2}. He obtained the following remarkable formula.

\begin{theorem}\label{thmrademacher}[\textbf{Rademacher's Formula \cite{Rademacher1937}}]~

If $n\geq 1$, the partition function $p(n)$ is represented by the convergent series
\begin{align*}
p(n)=\sum_{k=1}^{\infty}\frac{1}{\pi\sqrt{2}}A_k(n)\sqrt{k}\frac{d}{dn}\left(\frac{\di\sinh\left\{\frac{\pi}{k}\sqrt{\frac{2}{3}\left(n-\frac{1}{24}\right)}\right\}}{\di\sqrt{n-\frac{1}{24}}}\right),
\end{align*}where $A_k(n)$ is defined in Theorem \ref{thmrademacher2}.
 
\end{theorem}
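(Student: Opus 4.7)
The plan is to carry out the Hardy--Ramanujan circle method with Rademacher's refinement, in which the circular contour around the origin is replaced by a polygonal path made of Ford circles. The starting point is Cauchy's integral formula applied to the generating function \eqref{eq3_23_4}:
\[
p(n)=\frac{1}{2\pi i}\oint_{\mathcal C}\frac{F(x)}{x^{n+1}}\,dx,
\]
where $\mathcal C$ is any positively oriented closed curve around the origin inside the unit disk. The essential singularities of $F$ sit densely at every root of unity $e^{2\pi i h/k}$, so the strategy is to push $\mathcal C$ outward and localize the integral near each such singularity.

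First I would transfer the problem to the upper half-plane by the substitution $x=e^{2\pi i\tau}$, so that the unit disk becomes the strip $0\le \mathrm{Re}\,\tau<1$, $\mathrm{Im}\,\tau>0$. Using the Farey fractions of order $N$, I would replace $\mathcal C$ by the union of the upper halves of the Ford circles $C_{h,k}$, each tangent to the real axis at $h/k$ with diameter $1/k^2$, traversed in a definite order; these circles are pairwise tangent and exhaust the contour as $N\to\infty$. At each $h/k$, I would apply the Möbius transformation sending $h/k$ to $i\infty$, namely $\tau=\frac{h}{k}+\frac{iz}{k^2}$ composed with the $SL_2(\mathbb Z)$ element attached to the Farey neighbor, which maps $C_{h,k}$ to a vertical segment in a new variable $z$. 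The transformation law of the Dedekind eta function then rewrites $F(e^{2\pi i\tau})$ in terms of $F$ evaluated at a point close to $q=0$, producing an explicit arithmetic phase; these phases, summed over $0\le h<k$ with $\gcd(h,k)=1$, assemble to the coefficient $A_k(n)$ of Theorem~\ref{thmrademacher2}.

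Second, on each transformed arc I would separate the integrand into its dominant exponential part $\exp\bigl(\frac{\pi}{12 k^2 z}+\frac{2\pi}{k^2}\bigl(n-\tfrac{1}{24}\bigr)z\bigr)$ and a remainder coming from the $m\ge 2$ factors of $\prod(1-q^m)^{-1}$ in the modular image. Once the arcs are glued and the limit $N\to\infty$ is taken, the dominant part produces, via a standard contour deformation to a Hankel contour wrapping around the negative real axis, an integral that evaluates in closed form to
\[
\sqrt{k}\,\frac{d}{dn}\!\left(\frac{\sinh\!\left\{\frac{\pi}{k}\sqrt{\tfrac{2}{3}\!\left(n-\tfrac{1}{24}\right)}\right\}}{\sqrt{n-\tfrac{1}{24}}}\right),
\]
which is exactly the shape claimed in the theorem. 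Multiplying by the constants extracted from the eta transformation and by $A_k(n)$ gives the $k$-th term of the Rademacher series.

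The principal obstacle, as usual in this circle of ideas, is the uniform error analysis. One must show that the contribution of the subdominant factors on the Ford circles, together with the deviation of a Ford arc from an idealized Hankel contour, is summable in $k$ and vanishes in the limit $N\to\infty$. Concretely, I would bound $|\eta(\tau)|$ uniformly away from the cusps, use the trivial estimate $|A_k(n)|\le k$ (Kloosterman-type cancellation is not required for mere convergence), and show that the tail of $\prod_{m\ge 2}(1-q^m)^{-1}$ on each arc is $O(e^{-c/k^2|z|})$. Combining these with the elementary estimate $|z|\asymp 1/k$ on a Ford arc yields an error of order $k^{-3/2}$ per term, which is summable. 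With this estimate in hand, the partial sums over $k\le N$ converge absolutely to a series whose $k$-th term matches the statement, completing the derivation.
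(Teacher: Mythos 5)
Your overall architecture is the same as the paper's: Cauchy's formula for $p(n)$ on the Rademacher path built from Ford circles over the Farey dissection, the eta transformation law (Theorem \ref{thm4_1_1}) supplying the arithmetic phases that assemble into $A_k(n)$, splitting of the modular image into $1+G$, and evaluation of the surviving main integral as $I_{3/2}(x_k)$, hence the $\sinh$ derivative, via \eqref{eq4_6_2} and Proposition \ref{prop4_13_1}. That skeleton is exactly what the paper carries out.

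The gap is in the error analysis, which is the only delicate part, and the quantitative claims you make there would not close it. First, on the image circle of $C(h,k)$ (center $1/2$, radius $1/2$) one has $\operatorname{Re}(1/z)=1$ exactly, and on the chord joining the images of the tangency points $\operatorname{Re}(1/z)\geq 1/4$ (Proposition \ref{prop4_3_1}, Theorem \ref{thm4_1_4}); consequently the remainder $G(w')$ is only bounded by a constant depending on $n$, not by anything like $e^{-c/(k^2|z|)}$, and no decay in $k$ comes from that factor. Second, the decisive smallness comes from the Farey order $N$, not from $k$ alone: because consecutive denominators satisfy $k+k_{j\pm1}\geq N+1$, the tangency points obey $|z|\leq \sqrt{2}\,k/(N+1)$ and the chord has length at most $2\sqrt{2}\,k/(N+1)$ (Theorem \ref{thm4_1_4}). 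Your estimate ``$|z|\asymp 1/k$ on a Ford arc'' is false on the far side of the arc, where $|z|$ is of order $1$, and it misses precisely the $1/(N+1)$ factors that drive the argument. Third, a per-$k$ bound that is merely summable and independent of $N$ proves nothing: for each fixed $N$ the dissection is an exact identity, so you must show that the total defect --- the $G$-contribution plus the cost of replacing each Ford arc by the full circle (or by your Hankel-type contour) --- tends to $0$ as $N\to\infty$. In the paper this is the $O^*\bigl((\mathscr{C}_1+\mathscr{C}_2)N^{-1/2}\bigr)$ estimate \eqref{eq4_24_1}, obtained exactly from the $k/(N+1)$ bounds above; alternatively one could argue by dominated convergence with an $N$-uniform summable majorant together with termwise convergence in $k$, but your sketch sets up neither. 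Repairing the estimates along these lines turns your outline into the paper's proof.
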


In contrast to the Hardy-Ramanujan formula, Rademacher's formula is a convergent series. In \cite{Rademacher1937}, Rademacher also showed that if $N$ is of order $\sqrt{n}$, then the remainder after $N$ terms is of order $n^{-1/4}$.

The Rademacher's formula is a manifestation of the achievement of the circle method of Hardy, Ramanujan and Littlewood. This method has been very successful in problems of additive number theory \cite{Rademacher1940, Rademacher1943, Vaughan1981}. 

Partition function and its generalizations  have been under active studies \cite{Grosswald1958, Grosswald1960, Grosswald1960_2, Grosswald1962, Grosswald1963, Grosswald1984, Hagis1962, Hagis1963, Hagis1964, Hagis1964_2, Hagis1963, Hagis1965, Hagis1965_2, Hagis1965_3, Hagis1966, Hagis1970,Hagis1971,Hagis1971_2, Hua1942,  Iseki1959,  Iseki1960_2,   Livingood1945, Niven1940, Robertson1976, Robertson1976_2, Robertson1977, Selberg1989, Spencer1973, Subrahmanyasastri1972, Vangipuram1982, Vangipuram1982_2}, especially in recent years \cite{Almkvist1995, Barber2013, Brassesco2020, Bringmann2006, Craig_2022, Dewar2013, Iskander2020, Jennings2015, Johansson2012, Khaochim2019, Kim1999, McLaughlin2012, OSullivan2020, Pribitkin2019,
Selberg1989, Sills2010, Sills2010_2, Sills2010_3, Sills2012, Pribitkin2000, Pribitkin2009, Pribitkin2019,  Tani2011, Zagier2021}. Modern approach to proving Rademacher's formula usually involves advanced mathematics such as modular forms or Poincar$\acute{\text{e}}$ series \cite{Dewar2013, Pribitkin2000, Pribitkin2009, Pribitkin2019}.

The purpose if this note is to present the proof of the Rademacher's formula of partition function with all the necessary details. We follow closely the approach in \cite{Apostol_2}.

\bigskip
\section{Preliminaries}

\subsection{Dedekind Eta Function}
The Dedekind eta function is introduced by Dedekind in 1877 and is defined in the upper half plane $\di \mathbb{H}=\left\{\tau\,|\,\text{Im}\,\tau>0\right\}$ by the equation
\begin{align}\label{eq3_31_8}
\eta(\tau)=e^{\pi i\tau/12}\prod_{n=1}^{\infty}\left(1-e^{2\pi i n\tau}\right).
\end{align}The generating function $F(x)$ \eqref{eq3_23_4} for the sequence $\{p(n)\}$ is related to $\eta(\tau)$ by
\begin{align}
\label{eq3_23_5}
\eta(\tau)=e^{\pi i\tau/12}F(e^{2\pi i\tau}).
\end{align}The eta function $\eta(\tau)$ is closely related to the theory of modular forms \cite{Apostol_2}. 
Its 24th power, $\eta(\tau)^{24}$, is a modular form of weight 12.

To prove  Rademacher's formula, one needs a transformation formula for the Dedekind eta function under a fractional linear transformation
\[\tau \mapsto \frac{a\tau+b}{c\tau+d} \] 
defined by an element $\di \begin {bmatrix} a & b\\ c & d\end{bmatrix}$   of the modular group  $\Gamma=\text{PSL}\,(2,\mathbb{Z})$.
  
Before stating the transformation formula, let us define the Dedekind sum. 

\begin{definition}
If $h$ is an integer, $k$ is a positive integer larger than 1, the Dedekind sum $s(h,k)$ is defined as
\begin{gather}\label{eq8_13_1}
s(h,k)=\sum_{r=1}^{k-1}\frac{r}{k}\left(\frac{hr}{k}-\left\lfloor\frac{hr}{k}\right\rfloor-\frac{1}{2}\right).
\end{gather}When $k=1$,   
 $s(h,1) $ is defined as 0 for any integer $h$. 
\end{definition}

\begin{theorem}
[The Dedekind's Functional Equation]\label{thm4_5_2}~\\
Let $a, b, c, d$ be integers with $ad-bc=1$ and $c>0$. Under the fractional linear transformation 
\[\tau\mapsto\frac{a\tau+b}{c\tau+d},\]the Dedekind eta function satisfies the transformation formula.
\begin{equation}\label{eq8_13_6}
\eta\left(\frac{a\tau+b}{c\tau+d}\right)=\exp\left\{\pi i\left(\frac{a+d}{12c}+s(-d,c)\right)\right\}\left\{-i(c\tau+d)\right\}^{1/2}\eta(\tau).
\end{equation}
\end{theorem}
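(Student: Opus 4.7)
The plan is to follow the contour-integration proof in Apostol. First I would reduce the statement to a convenient one-parameter family. For each reduced fraction $h/k$ with $k>0$, pick $H$ with $hH\equiv -1\pmod{k}$ and let $\gamma\in\mathrm{SL}(2,\mathbb{Z})$ be given by $a=H$, $b=-(1+hH)/k$, $c=k$, $d=-h$. A direct calculation shows that $\gamma$ sends $\tau=(h+iz)/k$ (for any $z$ with $\mathrm{Re}\,z>0$) to $(H+i/z)/k$, and in this parametrization $-i(c\tau+d)=z$. Left-multiplying $\gamma$ by a power of $T\colon\tau\mapsto\tau+1$ merely inserts matching factors of $e^{\pi i n/12}$ on both sides of the claimed formula, so the family above captures every matrix with $c>0$ up to bookkeeping. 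The identity therefore collapses to
\begin{equation*}
\eta\!\left(\frac{H+i/z}{k}\right)=\exp\!\left\{\pi i\!\left(\frac{H-h}{12k}+s(h,k)\right)\right\}z^{1/2}\,\eta\!\left(\frac{h+iz}{k}\right),\qquad \mathrm{Re}\,z>0,
\end{equation*}
which by analyticity in $z$ needs only be established for $z$ real and positive.

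Second, I would take logarithms using the product expansion
\begin{equation*}
\log\eta(\tau)=\frac{\pi i\tau}{12}-\sum_{n=1}^{\infty}\sum_{m=1}^{\infty}\frac{1}{m}\,e^{2\pi i n m\tau}.
\end{equation*}
Evaluated at the two arguments above, the linear piece $\pi i\tau/12$ accounts for $\pi i(H-h)/(12k)$ together with the exponentially-decaying real contributions $-\pi z/(12k)$ and $-\pi/(12kz)$, which must be absorbed into the Lambert sums. The task is then to show that, after subtracting $\tfrac{1}{2}\log z$, the difference of the two double sums equals $\pi i\,s(h,k)$ plus those elementary pieces.

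Third and most substantively, I would evaluate the Lambert-type sums by a contour integral. The standard device is to integrate a function built from cotangents, for instance a variant of $F(w)=\cot(\pi k w)\cot(\pi w/z-\pi h w)/w$, around a large rectangle in the $w$-plane. Residues at $w=\pm n$ for $n\geq 1$ reproduce the two double sums; residues at $w=r/k$ for $r=1,\dots,k-1$ assemble into the finite cotangent sum
\begin{equation*}
\sum_{r=1}^{k-1}\cot\!\left(\frac{\pi r h}{k}\right)\cot\!\left(\frac{\pi r}{k}\right)=4k\,s(h,k),
\end{equation*}
while the triple pole at $w=0$ supplies the $\tfrac{1}{2}\log z$ and the $\pi/(12kz)\pm\pi z/(12k)$ terms. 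Sending the rectangle to infinity kills the integral and yields the required identity; the elementary symmetry $s(-h,k)=-s(h,k)$ then repackages $s(h,k)$ into $s(-d,c)$ as stated.

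The principal obstacles, I expect, are twofold. First, the identification of the cotangent sum with $4k\,s(h,k)$ is not trivial and rests on expressing the sawtooth $((x))=x-\lfloor x\rfloor-\tfrac{1}{2}$ by its Fourier series and a bookkeeping-heavy rearrangement of the defining sum \eqref{eq8_13_1}. Second, the branch of $z^{1/2}$, and correspondingly of $\{-i(c\tau+d)\}^{1/2}$, must be pinned down with care, since $\eta$-transformation laws are notorious for harboring hidden factors of $i$; the cleanest remedy is to check agreement at a single value, for instance as $z\to\infty$ along the positive real axis, where all Lambert series decay exponentially and both sides collapse to elementary expressions.
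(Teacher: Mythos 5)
You are supplying a proof where the paper itself gives none: for Theorem \ref{thm4_5_2} the paper only cites Apostol's derivation from Iseki's formula and the authors' companion paper, so your contour-integration route (Siegel's argument for $\tau\mapsto-1/\tau$, extended by Rademacher to general $c>0$) is a genuinely different, legitimate approach. Your reduction step is correct and well chosen: with $a=H$, $b=-(1+hH)/k$, $c=k$, $d=-h$ one indeed gets $-i(c\tau+d)=z$ for $\tau=(h+iz)/k$ and $\gamma\tau=(H+i/z)/k$, left multiplication by $T^n$ shifts $(a+d)/(12c)$ by $n/12$ in exact step with $\eta(\tau+n)=e^{\pi i n/12}\eta(\tau)$, and (since $s(-d,c)=s(h,k)$ outright, no oddness of $s$ needed) the reduced identity is equivalent to \eqref{eq8_13_6}; it is in fact exactly the form used later in Theorem \ref{thm4_1_1}.

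The gap is in the contour step, where your bookkeeping cannot balance as stated. Every residue of a meromorphic integrand such as $\cot(\pi k w)\cot(\pi w/z-\pi h w)/w$ is a Laurent coefficient and hence a rational expression in $z$; for instance the triple pole at $w=0$ contributes $-\tfrac13\left(\frac{\alpha}{\beta}+\frac{\beta}{\alpha}\right)$ with $\alpha=\pi k$, $\beta=\pi\left(\frac1z-h\right)$, which yields the elementary $\pi z$, $\pi/z$ corrections but can never produce $\tfrac12\log z$. If, as you claim, the integral over the expanding rectangle tends to zero, then nothing on either side generates the logarithm and the identity cannot close. In Siegel's proof and in Rademacher's generalization the $\tfrac12\log z$ comes precisely from the nonvanishing limit of the contour integral: the cotangent product tends to different constants in the sectors cut out by the two families of poles, and integrating the remaining $1/w$ along the large contour produces the logarithm. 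A second, related issue is the choice of integrand: the poles of your second cotangent lie at $w=mz/(1-hz)$, which are $z$-dependent and generically not at the rational points $r/k$, so it is not clear how the residues assemble into $\sum_{r=1}^{k-1}\cot\left(\frac{\pi rh}{k}\right)\cot\left(\frac{\pi r}{k}\right)=4k\,s(h,k)$; engineering the integrand and contour so that the Dedekind sum (or equivalently the sawtooth sum defining \eqref{eq8_13_1}) emerges is exactly the delicate core of the Rademacher contour proof, and your sketch leaves it unspecified. The branch normalization you propose (checking as $z\to\infty$ along the positive reals) is fine and standard, but the two points above must be repaired before the plan constitutes a proof.
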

A proof of this theorem is given in \cite{Apostol_2} using a more general formula proved by Iseki \cite{Iseki_1957}. In \cite{Kong_Teo_1}, we use a simpler approach to derive that formula.

\bigskip
\subsection{Modified Bessel Functions}
Let $\nu$ be a complex number. The modified Bessel function $I_{\nu}(x)$ is defined as
\begin{align}\label{eq4_6_1}
I_{\nu}(x)=\left(\frac{x}{2}\right)^{\nu}\sum_{j=0}^{\infty}\frac{1}{j!\,\Gamma(\nu+j+1)}\left(\frac{x}{2}\right)^{2j}.
\end{align} $I_{\nu}(x)$ has a contour integral representation given by
\[I_{\nu}(x)=\left(\frac{x}{2}\right)^{\nu}\frac{1}{2\pi i}\int_{c-i\infty}^{c+i\infty}t^{-\nu-1}\exp\left(t+\frac{x^2}{4t}\right)dt,\] where $c$ is a positive number. To prove this, we need the Hankel's formula.

Let $K$  be the contour which is a loop around the negative real axis. As shown in Figure \ref{figure1}, it consists of three parts $K_1$, $K_2$ and $K_3$, where $K_1$ and $K_3$ are the lower and upper edges of a cut in the $z$-plane along the negative real-axis, and $K_2$ is a positively oriented circle of radius $c<2\pi$ about the origin.

\begin{figure}[h]
\centering
\includegraphics[scale=0.58]{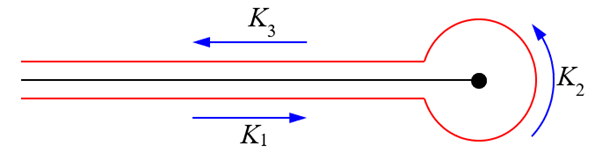}

\caption{The contour $K$.\fa}\label{figure1}
\end{figure}
\begin{lemma}[Hankel's Formula]\label{lemma4_12_1}
When $s\in\mathbb{C}$, we have
\begin{align*}
\frac{1}{\Gamma(s)}=\frac{1}{2\pi i}\int_K z^{-s}e^zdz.
\end{align*}

\end{lemma}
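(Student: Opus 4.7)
The plan is to parameterize each of the three pieces of $K$, take a limit as the radius $c$ of the central circle goes to zero in a half-plane where this is legitimate, and then invoke the reflection formula for $\Gamma$ and analytic continuation.

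On the lower edge $K_1$, write $z=re^{-i\pi}$ with $r$ decreasing from $\infty$ to $c$; on the upper edge $K_3$, write $z=re^{i\pi}$ with $r$ increasing from $c$ to $\infty$; and on the circle $K_2$, write $z=ce^{i\theta}$ with $\theta$ running from $-\pi$ to $\pi$. Using the principal branch of $z^{-s}$ (so $\arg z\in[-\pi,\pi]$), a direct calculation gives
$$
\int_{K_1}z^{-s}e^z\,dz+\int_{K_3}z^{-s}e^z\,dz=(e^{i\pi s}-e^{-i\pi s})\int_c^\infty r^{-s}e^{-r}\,dr=2i\sin(\pi s)\int_c^\infty r^{-s}e^{-r}\,dr,
$$
while the crude bound $|z^{-s}e^z\,dz|\leq c^{1-\mathrm{Re}(s)}e^c\,d\theta$ on $K_2$ shows $\int_{K_2}z^{-s}e^z\,dz=O(c^{1-\mathrm{Re}(s)})$.

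Assume temporarily that $\mathrm{Re}(s)<1$. Letting $c\to 0^+$, the $K_2$-piece vanishes and $\int_c^\infty r^{-s}e^{-r}\,dr\to\Gamma(1-s)$, so
$$
\int_K z^{-s}e^z\,dz=2i\sin(\pi s)\,\Gamma(1-s)=\frac{2\pi i}{\Gamma(s)},
$$
the second equality being the reflection formula $\Gamma(s)\Gamma(1-s)=\pi/\sin(\pi s)$. Dividing by $2\pi i$ yields Hankel's identity on the half-plane $\mathrm{Re}(s)<1$.

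To extend to all $s\in\mathbb{C}$, I would observe that $z^{-s}e^z$ is holomorphic on $\mathbb{C}\setminus(-\infty,0]$, so Cauchy's theorem implies that $\int_K z^{-s}e^z\,dz$ is independent of the chosen radius $c<2\pi$, and uniform convergence on compact subsets of the $s$-plane shows this integral defines an entire function of $s$. Since $1/\Gamma(s)$ is also entire and the two entire functions agree on $\mathrm{Re}(s)<1$, they agree on all of $\mathbb{C}$. The main subtle point I anticipate is keeping careful track of the branch of $z^{-s}$ along the two edges of the cut, so that the $e^{\pm i\pi s}$ phases (and hence the crucial $\sin(\pi s)$) appear with the correct sign; once this is in place, the rest reduces to standard contour estimates and a routine analytic continuation.
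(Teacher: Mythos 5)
Your proposal is correct and follows essentially the same route as the paper: collapse the contour onto the two edges of the cut as $c\to 0^+$, pick up the factor $e^{i\pi s}-e^{-i\pi s}=2i\sin(\pi s)$, identify the remaining integral with $\Gamma(1-s)$, apply the reflection formula, and extend by analytic continuation (the paper verifies the identity for real $s<0$ rather than on the half-plane $\mathrm{Re}\,s<1$, but this is an immaterial difference). Your extra bookkeeping on the branch of $z^{-s}$ and the $O(c^{1-\mathrm{Re}(s)})$ bound on the small circle simply makes explicit what the paper asserts.
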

\begin{proof}
The integral defines an analytic function for all $s\in \mathbb{C}$. It is sufficient to prove the formula for $s$ with $s<0$. Then the result follows from analytic continuation.
When $s<0$, the integral over $K_2$ goes to $0$ as $c\rightarrow 0^+$. In this limit, we have
\begin{align*}
\frac{1}{2\pi i}\int_K z^{-s}e^zdz&=-e^{-\pi i}\frac{1}{2\pi i}\int_0^{\infty}(e^{-\pi i}t)^{-s}e^{-t}dt+e^{\pi i}\frac{1}{2\pi i}\int_0^{\infty}(e^{\pi i}t)^{-s}e^{-t}dt\\
&=\frac{e^{\pi i s}-e^{-\pi i s}}{2\pi i}\int_0^{\infty}t^{-s}e^{-t}dt\\
&=\frac{\sin \pi s}{\pi}\Gamma(1-s)\\
&=\frac{1}{\Gamma(s)}.
\end{align*}

\end{proof}
Using this lemma, we can prove the contour integral formula for the modified Bessel function $I_{\nu}(x)$.

\begin{proposition}
Let  $\nu$ be a complex number and let $c$ be a positive number. If $x$ is real, then
\begin{align}\label{eq4_6_2}
I_{\nu}(x)=\left(\frac{x}{2}\right)^{\nu}\frac{1}{2\pi i}\int_{c-i\infty}^{c+i\infty}t^{-\nu-1}\exp\left(t+\frac{x^2}{4t}\right)dt.
\end{align}
\end{proposition}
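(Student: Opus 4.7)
The plan is to reduce the right-hand side of \eqref{eq4_6_2} to the power-series definition \eqref{eq4_6_1} of $I_{\nu}(x)$, by combining Lemma~\ref{lemma4_12_1} with the Taylor expansion of $\exp(x^{2}/(4t))$. Set $f(t):=t^{-\nu-1}\exp(t+x^{2}/(4t))$; this is analytic on the slit plane $\mathbb{C}\setminus(-\infty,0]$. Denote by $\rho$ the radius of the central circle in the Hankel contour $K$ of Figure~\ref{figure1}, chosen with $0<\rho<\min(c,2\pi)$ so that $K$ lies to the left of the vertical line $L=\{c+iy:y\in\mathbb{R}\}$.

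The first step is to deform $L$ into $K$. Applying Cauchy's theorem to the region bounded by pieces of $L$, pieces of $K$, and horizontal closing segments at $\mathrm{Im}\,t=\pm R$ reduces the task to showing that the integrals over the closing segments vanish as $R\to\infty$. On such a segment $t=u\pm iR$ with $u\le c$, one has $|e^{t}|\le e^{c}$, $|\exp(x^{2}/(4t))|\to 1$ (because $\mathrm{Re}(1/t)=u/(u^{2}+R^{2})\to 0$), and $|t^{-\nu-1}|=O(R^{-\mathrm{Re}\,\nu-1})$, so the total contribution is $O(R^{-\mathrm{Re}\,\nu-1})$ and tends to zero provided $\mathrm{Re}\,\nu>-1$; the remaining values of $\nu$ are then recovered by analytic continuation in $\nu$, since both sides of \eqref{eq4_6_2} are entire in $\nu$.

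Once the integral sits on $K$, I use the fact that $|t|\ge\rho$ everywhere on $K$ to justify the uniformly convergent expansion $\exp(x^{2}/(4t))=\sum_{j\ge 0}(x^{2}/(4t))^{j}/j!$, which is dominated on $K$ by the convergent series $\sum_{j\ge 0}(x^{2}/(4\rho))^{j}/j!$. Interchanging sum and integral gives
\[
\frac{1}{2\pi i}\int_{K} t^{-\nu-1}e^{t}\exp\!\left(\frac{x^{2}}{4t}\right)dt
=\sum_{j=0}^{\infty}\frac{1}{j!}\left(\frac{x}{2}\right)^{2j}\cdot\frac{1}{2\pi i}\int_{K} t^{-(\nu+j)-1}e^{t}\,dt,
\]
and Hankel's formula evaluates each inner integral as $1/\Gamma(\nu+j+1)$. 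Multiplying by the prefactor $(x/2)^{\nu}$ reproduces \eqref{eq4_6_1}, which finishes the argument.

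The main obstacle is the contour-deformation step: one must handle the branch cut of $t^{-\nu-1}$ along the negative real axis carefully and control the horizontal arcs uniformly, and it is this estimate which forces the intermediate hypothesis $\mathrm{Re}\,\nu>-1$ before analytic continuation closes the gap. The subsequent series manipulation and the single appeal to Hankel's formula are then essentially routine, the uniform lower bound $|t|\ge\rho$ on $K$ supplying the domination needed for the term-by-term integration.
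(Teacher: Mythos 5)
Your proposal follows essentially the same route as the paper: deform the vertical line of integration onto the Hankel contour $K$, expand $\exp\left(x^{2}/(4t)\right)$ in its Taylor series, integrate term by term using the uniform bound $|t|\geq \rho$ on $K$, and evaluate each term by Lemma \ref{lemma4_12_1} to recover the series \eqref{eq4_6_1}. The only difference is one of detail rather than of method: you spell out the justification of the contour deformation (horizontal closing segments, the provisional hypothesis $\mathrm{Re}\,\nu>-1$, and continuation in $\nu$), which the paper compresses into a brief appeal to decay of the integrand and the residue theorem.
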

\begin{proof}The integrand 
\[t^{-\nu-1}\exp\left(t+\frac{x^2}{4t}\right)\]is an analytic function of $t$ on the domain $\mathbb{C}-\left\{t\,|\,\text{Im}\,t=0,\;\text{Re}\,t\leq 0\right\}$. It decays exponentially fast in the region $\text{Re}\,t\leq c$ when $|t|\rightarrow\infty$.
Using Cauchy residue theorem, we can take the limit $c\rightarrow 0^+$, and change the contour to $K$. Namely,
\begin{align*}
f(x)&=\left(\frac{x}{2}\right)^{\nu}\frac{1}{2\pi i}\int_{c-i\infty}^{c+i\infty}t^{-\nu-1}\exp\left(t+\frac{x^2}{4t}\right)dt\\&=\left(\frac{x}{2}\right)^{\nu}\frac{1}{2\pi i}\int_{K}t^{-\nu-1}\exp\left(t+\frac{x^2}{4t}\right)dt.
\end{align*}Now, using the Taylor series of the exponential function, we have
\begin{align*}
f(x)&=\left(\frac{x}{2}\right)^{\nu}\sum_{j=0}^{\infty}\frac{1}{j!}\left(\frac{x}{2}\right)^{2j}\frac{1}{2\pi i}\int_{K}t^{-\nu-j-1}e^tdt\\
&=\left(\frac{x}{2}\right)^{\nu}\sum_{j=0}^{\infty}\frac{1}{j!}\left(\frac{x}{2}\right)^{2j}\frac{1}{\Gamma(\nu+j+1)},
\end{align*}where we have used Lemma \ref{lemma4_12_1}.
It follows from \eqref{eq4_6_1} that
$$f(x)=I_{\nu}(x).$$

\end{proof}

For our applications, we are interested in the case where $\nu=3/2$.
\begin{proposition}\label{prop4_13_1}
 $I_{\frac{3}{2}}(x)$ has an explicit formula given by
\begin{align*}
I_{\frac{3}{2}}(x)=\sqrt{\frac{2x}{\pi}} \frac{d}{dx}  \frac{\sinh x}{x}.
\end{align*}
\end{proposition}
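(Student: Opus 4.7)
The plan is to verify the identity by matching power series coefficients term-by-term on both sides.

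\textbf{Step 1: Expand the left-hand side.} I would substitute $\nu = 3/2$ into the defining series \eqref{eq4_6_1} and use the standard half-integer value
\[\Gamma\!\left(j+\tfrac{5}{2}\right)=\frac{(2j+4)!\,\sqrt{\pi}}{4^{j+2}(j+2)!},\]
obtained from $\Gamma(1/2)=\sqrt{\pi}$ and the recursion $\Gamma(s+1)=s\Gamma(s)$. After cancelling the factor $(j+2)$ and simplifying the powers of $2$, the series should reduce to
\[I_{3/2}(x)=\sqrt{\frac{2}{\pi}}\sum_{j=0}^{\infty}\frac{2(j+1)}{(2j+3)!}\,x^{2j+3/2}.\]

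\textbf{Step 2: Expand the right-hand side.} Starting from the familiar series $\sinh x/x=\sum_{j\ge 0}x^{2j}/(2j+1)!$, I would write
\[\frac{d}{dx}\frac{\sinh x}{x}=\frac{x\cosh x-\sinh x}{x^{2}},\]
and then expand $x\cosh x-\sinh x$ term-by-term. The coefficient of $x^{2j+1}$ collapses to $\frac{1}{(2j)!}-\frac{1}{(2j+1)!}=\frac{2j}{(2j+1)!}$, which vanishes at $j=0$; reindexing gives
\[x\cosh x-\sinh x=\sum_{j=0}^{\infty}\frac{2(j+1)}{(2j+3)!}\,x^{2j+3}.\]
Multiplying by the prefactor $\sqrt{2x/\pi}\cdot x^{-2}=\sqrt{2/\pi}\,x^{-3/2}$ produces exactly the same series as in Step 1, finishing the proof.

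\textbf{Main obstacle.} There is no conceptual difficulty; the only thing to be careful about is the bookkeeping of factorials, powers of $2$, and factors of $\sqrt{\pi}$ when simplifying $\Gamma(j+5/2)$ and reconciling the exponent $x^{2j+3/2}$ on both sides. An alternative shortcut would be to invoke the recurrence $\frac{d}{dx}\bigl[x^{-\nu}I_\nu(x)\bigr]=x^{-\nu}I_{\nu+1}(x)$ together with the closed form $I_{1/2}(x)=\sqrt{2/(\pi x)}\sinh x$, but since neither identity has been established in the paper, the direct series comparison above is the most self-contained route.
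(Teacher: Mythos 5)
Your proposal is correct and follows essentially the same route as the paper: a direct power-series computation from the definition \eqref{eq4_6_1}, using the half-integer value of $\Gamma(j+5/2)$ (your form $\frac{(2j+4)!\sqrt{\pi}}{4^{j+2}(j+2)!}$ is just the double-factorial expression the paper uses, rewritten), leading to the series $\sqrt{2/\pi}\sum_{j\ge 0}\frac{2(j+1)}{(2j+3)!}x^{2j+3/2}$ on both sides. The only cosmetic difference is that the paper recognizes the series directly as $\sqrt{2x/\pi}\,\frac{d}{dx}\sum_{j\ge 0}\frac{x^{2j}}{(2j+1)!}$ rather than expanding $\frac{x\cosh x-\sinh x}{x^{2}}$ and matching coefficients, which is an equivalent bookkeeping choice.
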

\begin{proof}
This is a direct computation using \eqref{eq4_6_1}. Using  the fact that
$$\Gamma\left(j+\frac{5}{2}\right)=\frac{(2j+3)(2j+1)\ldots\times 1}{2^{j+2}}\Gamma\left(\frac{1}{2}\right)=\sqrt{\pi}\frac{(2j+3)(2j+1)\ldots\times 1}{2^{j+2}},$$ we have
\begin{align*}
I_{\frac{3}{2}}(x)&=\left(\frac{x}{2}\right)^{\frac{3}{2}}\sum_{j=0}^{\infty}\frac{1}{j!\,\Gamma( j+\frac{5}{2})}\left(\frac{x}{2}\right)^{2j}\\
&=\frac{2}{\sqrt{\pi}}\left(\frac{x}{2}\right)^{\frac{1}{2}}\sum_{j=0}^{\infty}\frac{2j+2}{(2j+3)!}x^{2j+1}\\
&=\sqrt{\frac{2x}{\pi}} \frac{d}{dx}\sum_{j=0}^{\infty}\frac{1}{(2j+3)!}x^{2j+2}\\
&=\sqrt{\frac{2x}{\pi}} \frac{d}{dx}\sum_{j=0}^{\infty}\frac{1}{(2j+1)!}x^{2j}\\
&=\sqrt{\frac{2x}{\pi}} \frac{d}{dx}  \frac{\sinh x}{x}.
\end{align*}
\end{proof}

\bigskip
\subsection{Farey Series and Ford Circles}\label{secFarey}
 Let $n$ be a positive integer. The set of Farey fractions of order $n$, denoted by $F_n$, is the set of all reduced fractions in the closed interval $[0,1]$ whose denominator is not larger than $n$, listed in increasing order of magnitude. 
 
 \begin{example}
 The sets $F_n$, $1\leq n\leq 10$, are given by
 \begin{align*}
 F_1:&\; \frac{0}{1}, \frac{1}{1}\\
 F_2:&\;\frac{0}{1}, \frac{1}{2}, \frac{1}{1}\\
F_3:& \frac{0}{1}, \frac{1}{3}, \frac{1}{2}, \frac{2}{3}, \frac{1}{1}\\
F_4:& \frac{0}{1},\frac{1}{4}, \frac{1}{3}, \frac{1}{2}, \frac{2}{3},\frac{3}{4}, \frac{1}{1}\\
F_5:& \frac{0}{1},\frac{1}{5}, \frac{1}{4}, \frac{1}{3}, \frac{2}{5}, \frac{1}{2}, \frac{3}{5},\frac{2}{3},\frac{3}{4},\frac{4}{5}, \frac{1}{1}\\
F_6:& \frac{0}{1},\frac{1}{6},\frac{1}{5}, \frac{1}{4}, \frac{1}{3}, \frac{2}{5}, \frac{1}{2}, \frac{3}{5},\frac{2}{3},\frac{3}{4},\frac{4}{5}, \frac{5}{6},\frac{1}{1}\\
F_7:& \frac{0}{1},\frac{1}{7},\frac{1}{6},\frac{1}{5}, \frac{1}{4}, \frac{2}{7},\frac{1}{3}, \frac{2}{5}, \frac{3}{7},\frac{1}{2}, \frac{4}{7},\frac{3}{5},\frac{2}{3},\frac{5}{7},\frac{3}{4},\frac{4}{5}, \frac{5}{6},\frac{6}{7},\frac{1}{1}\\
F_8:& \frac{0}{1},\frac{1}{8},\frac{1}{7},\frac{1}{6},\frac{1}{5}, \frac{1}{4}, \frac{2}{7},\frac{1}{3}, \frac{3}{8},\frac{2}{5}, \frac{3}{7},\frac{1}{2}, \frac{4}{7},\frac{3}{5},\frac{5}{8},\frac{2}{3},\frac{5}{7},\frac{3}{4},\frac{4}{5}, \frac{5}{6},\frac{6}{7},\frac{7}{8},\frac{1}{1}\\
F_9:& \frac{0}{1},\frac{1}{9},\frac{1}{8},\frac{1}{7},\frac{1}{6},\frac{1}{5}, \frac{2}{9},\frac{1}{4}, \frac{2}{7},\frac{1}{3}, \frac{3}{8},\frac{2}{5}, \frac{3}{7},\frac{4}{9},\frac{1}{2}, \frac{5}{9},\frac{4}{7},\frac{3}{5},\frac{5}{8},\frac{2}{3},\frac{5}{7},\frac{3}{4},\frac{7}{9},\frac{4}{5}, \frac{5}{6},\frac{6}{7},\frac{7}{8},\frac{8}{9},\frac{1}{1}\\
F_{10}:& \frac{0}{1},\frac{1}{10},\frac{1}{9},\frac{1}{8},\frac{1}{7},\frac{1}{6},\frac{1}{5}, \frac{2}{9},\frac{1}{4}, \frac{2}{7},\frac{3}{10},\frac{1}{3}, \frac{3}{8},\frac{2}{5}, \frac{3}{7},\frac{4}{9},\frac{1}{2}, \frac{5}{9},\frac{4}{7},\frac{3}{5},\frac{5}{8},\frac{2}{3},\frac{7}{10},\frac{5}{7},\frac{3}{4},\frac{7}{9},\frac{4}{5}, \frac{5}{6},\frac{6}{7},\frac{7}{8},\frac{8}{9},\frac{9}{10},\frac{1}{1}
 \end{align*}
 \end{example}
 Obviously, for $n\geq 2$, $F_{n-1}$ is a subset of $F_{n}$. Moreover,
 $$|F_{n}|-|F_{n-1}|=\varphi(n),$$ which is the number of positive integers less than or equal to $n$ that are relatively prime to $n$.
 
 To be more precise, one adds in $\varphi(n)$ fractions of the form $\di\frac{h}{n}$ with $1\leq h\leq n$ and $(h,n)=1$ into the set $F_{n-1}$ to obtain $F_n$. To determine the precise places to insert these fractions, one need the following lemmas.  
 
 \begin{lemma}\label{lemma3_16_2}
 If $a$, $b$, $c$ and $d$ are positive integers  such that $$\di \frac{a}{b}<\frac{c}{d},$$ then  
 $$\frac{a}{b}<\frac{a+c}{b+d}<\frac{c}{d}.$$
 
 \end{lemma}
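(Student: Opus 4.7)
The plan is to reduce both claimed inequalities to the single algebraic fact $ad < bc$, which follows from the hypothesis $a/b < c/d$ by cross-multiplying, legitimately, since $b$ and $d$ are positive.

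For the left inequality $a/b < (a+c)/(b+d)$, I would multiply through by the positive quantity $b(b+d)$. This reduces the claim to $a(b+d) < b(a+c)$, which after cancelling the common term $ab$ becomes exactly $ad < bc$. Similarly, for the right inequality $(a+c)/(b+d) < c/d$, I would multiply through by $d(b+d) > 0$, obtaining $(a+c)d < c(b+d)$; cancelling $cd$ again yields $ad < bc$. So both inequalities are equivalent to the cross-multiplied form of the hypothesis, and the lemma follows.

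There is no real obstacle here: the proof is a two-line manipulation, and the only thing to be careful about is to note at the outset that $b, d, b+d > 0$ so that cross-multiplication preserves the direction of the inequalities. I would present the two computations side by side (or back to back) and invoke the positivity of the denominators once at the start.
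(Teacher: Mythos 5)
Your proof is correct and is essentially the same argument as the paper's: both reduce everything to the single fact $bc-ad>0$ (obtained from the hypothesis using $b,d>0$) and then use positivity of $b+d$ to conclude, the only cosmetic difference being that the paper writes the two inequalities as sign checks of the differences $\frac{a+c}{b+d}-\frac{a}{b}=\frac{bc-ad}{b(b+d)}$ and $\frac{c}{d}-\frac{a+c}{b+d}=\frac{bc-ad}{d(b+d)}$, while you phrase the same algebra as cross-multiplication.
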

 \begin{proof}
 We have
 \begin{gather*}
 \frac{c}{d}-\frac{a}{b}=\frac{bc-ad}{bd}>0
 \end{gather*}Since $b$ and $d$ are positive, we have
 $$bc-ad>0.$$
 Hence,
 \begin{align*}
 \frac{a+c}{b+d}-\frac{a}{b}&=\frac{bc-ad}{b(b+d)}>0,\\
 \frac{c}{d}-\frac{a+c}{b+d}&=\frac{bc-ad}{d(b+d)}>0.
 \end{align*}This proves the assertion.
 \end{proof}
 
 \begin{lemma}\label{lemma3_16_1}
 If $a$, $b$, $c$, $d$, $h$ and $k$ are positive integers such that
 \begin{align}\label{eq3_16_1}
\frac{a}{b}<\frac{h}{k}<\frac{c}{d}
\end{align} and $bc-ad=1$,  then
 $$k\geq b+d.$$
 \end{lemma}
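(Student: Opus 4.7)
The plan is to exploit the hypothesis $bc - ad = 1$ by forming a clever integer linear combination of the two strict inequalities, using the fact that positive integers are at least $1$.

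First I would convert each of the two strict inequalities into an integer inequality. From $\frac{a}{b} < \frac{h}{k}$, clearing denominators (all positive) gives $hb - ak > 0$, and since $hb - ak$ is an integer, we actually have $hb - ak \geq 1$. Similarly, $\frac{h}{k} < \frac{c}{d}$ yields $ck - hd \geq 1$.

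Next I would combine these with appropriate positive integer weights chosen so that $h$ drops out and $bc - ad$ appears. Multiplying the first inequality by $d$ and the second by $b$ and adding, the coefficient of $h$ becomes $db - bd = 0$, while the coefficient of $k$ becomes $-da + bc = bc - ad = 1$. Hence the left-hand side collapses to exactly $k$, and the right-hand side is at least $b + d$. This gives $k \geq b + d$, which is the desired conclusion.

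The proof is essentially a one-line algebraic identity after the integrality step, so there is no real obstacle; the only subtlety is recognizing that the strict rational inequalities upgrade to integer inequalities $\geq 1$, which is what lets the bound $b + d$ emerge rather than merely a positive quantity. This lemma is the key ingredient that will later force any new denominator $k \leq n$ not already appearing in $F_{n-1}$ to be inserted via the mediant $\frac{a+c}{b+d}$ of its Farey neighbors, in conjunction with Lemma \ref{lemma3_16_2}.
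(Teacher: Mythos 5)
Your proposal is correct and is essentially the paper's own argument: both upgrade the strict inequalities to the integer bounds $bh-ak\geq 1$, $ck-dh\geq 1$, and then use the identity $k=k(bc-ad)=b(ck-dh)+d(bh-ak)\geq b+d$. No issues.
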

\begin{proof}
The   inequalities in \eqref{eq3_16_1}  imply that
\begin{align*}
bh-ak&\geq 1,\\
ck-dh&\geq  1.
\end{align*}
Therefore,
\begin{align*}
k&=  k(bc-ad)\\
&=b(ck-dh)+d(bh-ak)\\
&\geq  b+d.
\end{align*}
\end{proof}

From this, we can deduce the following theorem.
\begin{theorem}\label{thm3_16_1}
Let $a$, $b$, $c$, $d$  be positive integers such that
 \begin{align*} 
\frac{a}{b}< \frac{c}{d},
\end{align*} and 
$bc-ad=1$. If 
$$\max\{b, d\}\leq n\leq b+d-1,$$ then $a/b$ and $c/d$ are consecutive terms in $F_n$.

\end{theorem}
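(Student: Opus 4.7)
The plan is to apply Lemma \ref{lemma3_16_1} directly. Essentially all the work has already been done in that lemma; the theorem is just packaging it into a statement about Farey series.

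First, I would verify that both $a/b$ and $c/d$ belong to $F_n$. The hypothesis $bc-ad=1$ forces $\gcd(a,b)=1$ and $\gcd(c,d)=1$, so both fractions are already in lowest terms. The condition $\max\{b,d\}\leq n$ ensures both denominators qualify for the Farey set of order $n$, and the assumed inequality $a/b<c/d$ together with positivity places them in the interval one considers for $F_n$ (this part is straightforward given that in practice the theorem is applied to fractions already known to lie in $[0,1]$).

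Next, I would show that no element of $F_n$ can lie strictly between $a/b$ and $c/d$. Suppose for contradiction that there exists $h/k\in F_n$ with
\[
\frac{a}{b}<\frac{h}{k}<\frac{c}{d}.
\]
Since $h/k\in F_n$, we have $k\leq n$. On the other hand, Lemma \ref{lemma3_16_1} applied to this configuration (using the hypothesis $bc-ad=1$) gives $k\geq b+d$. Combining the two yields
\[
b+d\leq k\leq n\leq b+d-1,
\]
which is impossible. Hence no Farey fraction of order $n$ fits between $a/b$ and $c/d$, and so they are consecutive in $F_n$.

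I do not anticipate a genuine obstacle here, since the content of the theorem is essentially a restatement of Lemma \ref{lemma3_16_1} in Farey-series language. The only mildly delicate point is confirming membership of the two fractions in $F_n$, which requires noting both the reducedness (immediate from $bc-ad=1$) and the denominator bound (immediate from $\max\{b,d\}\leq n$). After that, the consecutive property is a one-line contradiction against the upper bound $n\leq b+d-1$.
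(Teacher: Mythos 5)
Your proposal is correct and follows essentially the same route as the paper: membership of $a/b$ and $c/d$ in $F_n$ from the denominator bound (with reducedness coming from $bc-ad=1$), and then Lemma \ref{lemma3_16_1} ruling out any intermediate fraction $h/k$ with $k\leq n\leq b+d-1$. Your explicit contradiction $b+d\leq k\leq n\leq b+d-1$ is just a slightly more spelled-out version of the paper's one-line conclusion.
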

\begin{proof}
Since $\max\{b,d\}\leq n$, $b$ and $d$ are less than or equal to $n$. Hence, $a/b$ and $c/d$ are in $F_n$.\\
 By Lemma \ref{lemma3_16_1}, no fractions $h/k$ with $k\leq b+d-1$ can satisfy
$$\frac{a}{b}<\frac{h}{k}<\frac{c}{d}.$$Since $F_n$ only contains reduced fractions with denominator at most $n$ and $n\leq b+d-1$, this shows that $a/b$ and $c/d$ are consecutive terms in $F_n$.
\end{proof}

Now we can give the precise algorithm to construct $F_n$ from $F_{n-1}$.
First notice that for the two terms $$\frac{a}{b}=\frac{0}{1}\quad \text{and} \quad\frac{c}{d}=\frac{1}{1}$$ in $F_1$,  $a=0$, $b=c=d=1$ and thus
$$bc-ad=1.$$
Now to construct $F_2$, we insert the term $$\frac{h}{k}=\frac{a+c}{b+d}=\frac{1}{2}$$ in between them.
It is easy to check that if $bc-ad=1$, $h=a+c$ and $k=b+d$, then
$$bh-ak=1\quad\text{and}\quad ck-dh=1.$$ In general, assume that $F_{n-1}$ has been constructed and for any two consecutive terms $a/b$ and $c/d$ in $F_{n-1}$, $bc-ad=1$. 
To construct $F_n$, we need  to insert the $\varphi(n)$ fractions $h/n$ with $1\leq h\leq n$ and $(h,n)=1$ into $F_{n-1}$. To do this, we look for consecutive pairs of reduced fractions $a/b$ and $c/d$ in $F_{n-1}$ for which $a/b<c/d$ and $b+d=n$. Let $h=a+c$. Since $0\leq a\leq b$ and $0\leq c\leq d$, we have
$$0\leq h=a+c\leq b+d.$$  Lemma \ref{lemma3_16_2} implies that
 $$\frac{a}{b}<\frac{h}{n}<\frac{c}{d}.$$ By induction hypothesis, $bc-ad=1$. As we have shown, this implies that 
$$bh-an=1\quad\text{and}\quad cn-dh=1.$$
It remains to show that we can find exactly $\varphi(n)$ pairs of consecutive terms $a/b$ and $c/d$ in $F_{n-1}$ such that $b+d=n$.

For $n\geq 2$, there are exactly $\varphi(n)$ positive integers $b$ with $1\leq b\leq n$ and $(b,n)=1$. In fact, we must also have $b\leq n-1$. For   each of these integers $b$, $d=n-b$ is also relatively prime to $n$ and $1\leq d\leq n-1$. Since $b+d=n$, $b$ and $d$ must also be relatively prime. There exists $s_0$ and $t_0$ such that
$$bs_0-dt_0=1.$$ Any  pairs of integers $(s,t)$ with $bs -dt=1$ can be written as
\begin{align*}
s=s_0+du,\quad t=t_0+bu
\end{align*}for some integer $u$. Let $u_0$ be the smallest one so that $c=s_0+du_0\geq 1$. Then $c\leq d$. If $a=t_0+bu_0$, then $bc-ad=1$. Since 
$$ad=bc-1<bc\leq bd,$$ we find that $a<b$. Since $b\geq 1, c\geq 1$, we have $ad\geq 0$ and thus $a\geq 0$. In other words, we have shown that for each of the $\varphi(n)$ positive integers $b$ so that $1\leq b\leq n$ and $(b,n)=1$, there are unique integers $a, c$ and $d$ such that
$$0\leq \frac{a}{b}<\frac{c}{d}\leq 1$$ and $bc-ad=1$. This shows that we can find exactly $\varphi(n)$ consecutive pairs in $F_{n-1}$ for us to insert the fractions $h/n$ with $1\leq h\leq n$ and $(h,n)=1$.

Given a reduced fraction $h/k$, the Ford circle $C(h,k)$ (Figure \ref{figure2}) is the circle in the complex plane with center at $$c_{h,k}=\frac{h}{k}+\frac{i}{2k^2}$$ and radius
$$r_{h,k}=\frac{1}{2k^2}.$$
Obviously, each Ford circle $C(h,k)$ touches the $x$-axis at the point $(h/k, 0)$.

\begin{figure}[h]
\centering
\includegraphics[scale=0.9]{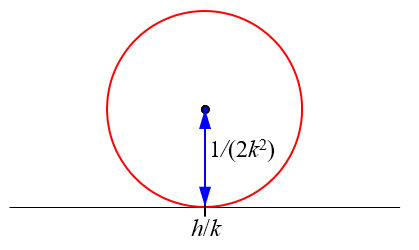}

\caption{The Ford circle $C(h,k)$.\fa}\label{figure2}
\end{figure}

\begin{theorem}
Two distinct Ford circles $C(a,b)$ and $C(c,d)$ are either tangent to each other or disjoint. They are tangent if and only if $bc-ad=\pm 1$. In particular, if $a/b$ and $c/d$ are two consecutive Farey fractions, then the two Ford circles $C(a,b)$ and $C(c,d)$ are tangent to each other.
\end{theorem}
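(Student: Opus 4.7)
The plan is to reduce the statement to a direct computation comparing the distance between the centers of $C(a,b)$ and $C(c,d)$ with the sum of their radii.

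First I would write down the squared distance between the centers $c_{a,b}=a/b+i/(2b^2)$ and $c_{c,d}=c/d+i/(2d^2)$, namely
\begin{equation*}
|c_{a,b}-c_{c,d}|^2=\left(\frac{a}{b}-\frac{c}{d}\right)^2+\left(\frac{1}{2b^2}-\frac{1}{2d^2}\right)^2=\frac{(bc-ad)^2}{b^2d^2}+\frac{(d^2-b^2)^2}{4b^4d^4},
\end{equation*}
and the square of the sum of the radii,
\begin{equation*}
(r_{a,b}+r_{c,d})^2=\left(\frac{1}{2b^2}+\frac{1}{2d^2}\right)^2=\frac{(d^2+b^2)^2}{4b^4d^4}.
\end{equation*}
Subtracting and using $(d^2-b^2)^2-(d^2+b^2)^2=-4b^2d^2$, the key identity
\begin{equation*}
|c_{a,b}-c_{c,d}|^2-(r_{a,b}+r_{c,d})^2=\frac{(bc-ad)^2-1}{b^2d^2}
\end{equation*}
emerges. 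This is the main content of the proof.

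Once this identity is established, the rest is a case check. Since $bc-ad$ is an integer, the right-hand side is nonnegative, so the two circles can never properly overlap: they are either disjoint (when $|bc-ad|\geq 2$) or externally tangent (when $|bc-ad|=1$). The case $bc-ad=0$ forces $a/b=c/d$, making the circles coincide, which is excluded by the distinctness assumption. This gives both the dichotomy and the characterization of tangency by $bc-ad=\pm 1$.

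Finally, for the statement about consecutive Farey fractions, I would invoke the inductive construction of $F_n$ carried out in Section \ref{secFarey}: we showed there that any two consecutive terms $a/b<c/d$ in $F_n$ satisfy $bc-ad=1$, so the Ford circles $C(a,b)$ and $C(c,d)$ are tangent by the criterion just proved. I do not anticipate any serious obstacle; the only thing to be a bit careful about is handling the endpoints $0/1$ and $1/1$ and verifying that the base case $F_1$ (and thus the inductive hypothesis $bc-ad=1$) is consistent with the formula.
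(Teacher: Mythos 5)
Your proposal is correct and follows essentially the same route as the paper: comparing the squared center distance with the squared sum of radii to obtain the identity $D^2-S^2=\bigl((bc-ad)^2-1\bigr)/(b^2d^2)$, then using distinctness to rule out $bc-ad=0$ and the Farey property $bc-ad=1$ for consecutive fractions to conclude tangency. No gaps to report.
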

\begin{proof}

\begin{figure}[h]
\centering
\includegraphics[scale=0.9]{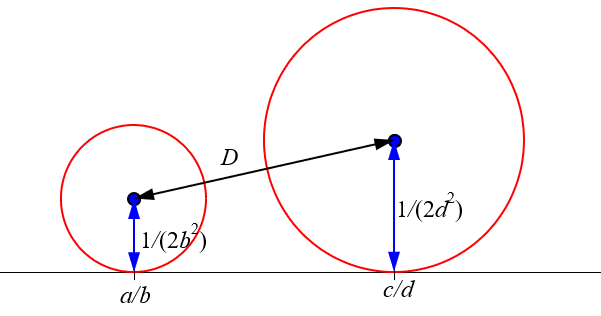}

\caption{The circles $C(a,b)$ and $C(c,d)$.\fa}\label{figure3}
\end{figure}

Let $D$ be the distance between the centers of $C(a,b)$ and $C(c,d)$. Then
\begin{align*}
D^2=\left(\frac{a}{b}-\frac{c}{d}\right)^2+\left(\frac{1}{2b^2}-\frac{1}{2d^2}\right)^2.
\end{align*}The sum of the radii of the two circles is
$$S=\frac{1}{2b^2}+\frac{1}{2d^2}.$$
Notice that   the two circles are disjoint if and only if $D>S$,  and the two circles are tangent to each other if and only if $D=S$.
A straightforward computation gives
\begin{align*}
D^2-S^2&=\frac{(bc-ad)^2-1}{b^2d^2}.
\end{align*}The two circles are distinct, so  $bc-ad\neq 0$. Therefore, $(bc-ad)^2\geq 1$. This shows that
$$D^2\geq S^2$$ and so $D\geq S$. Moreover, $D=S$ if and only if $(bc-ad)^2=1$, if and only if $bc-ad=\pm 1$. 
\end{proof}

Now we study the points of tangency of two Ford circles that are tangent to each other.
\begin{theorem}
Let $$\frac{h_1}{k_1}<\frac{h}{k}<\frac{h_2}{k_2}$$ be three consecutive Farey fractions. The points of tangency of $C(h,k)$ with $C(h_1, k_1)$ and $C(h_2, k_2)$ are given by
\begin{subequations}\label{eq3_16_5}
\begin{align}
\alpha_1(h,k)&=\frac{h}{k}-\frac{k_1}{k(k^2+k_1^2)}+\frac{i}{k^2+k_1^2},\label{eq3_16_5_1}\\
\alpha_2(h,k)&=\frac{h}{k}+\frac{k_2}{k(k^2+k_2^2)}+\frac{i}{k^2+k_2^2}.\label{eq3_16_5_2}
\end{align}\end{subequations}
%Moreover, the point of contact $\alpha_1(h,k)$ lies on the semicircle whose diameter is the interval $[h_1/k_1, h/k]$. 
\end{theorem}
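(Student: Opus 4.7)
The plan is to exploit the elementary geometric fact that, when two circles are externally tangent, the point of tangency lies on the segment joining their centers and divides that segment in the ratio of the two radii. Since $C(h,k)$ has center $c_{h,k}=h/k+i/(2k^2)$ and radius $r_{h,k}=1/(2k^2)$, and similarly for $C(h_1,k_1)$, the tangency point is
\[
\alpha_1(h,k)=c_{h,k}+\frac{r_{h,k}}{r_{h,k}+r_{h_1,k_1}}\bigl(c_{h_1,k_1}-c_{h,k}\bigr).
\]
A short computation gives $r_{h,k}/(r_{h,k}+r_{h_1,k_1})=k_1^2/(k^2+k_1^2)$, so everything reduces to simplifying the real and imaginary parts of this affine combination.

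For the real part, I would use the Farey consecutiveness: since $h_1/k_1<h/k$ are adjacent in some $F_n$, the previous subsection established $hk_1-h_1k=1$, hence $h/k-h_1/k_1=1/(kk_1)$. Substituting this into $\mathrm{Re}(\alpha_1)=h/k-(k_1^2/(k^2+k_1^2))(h/k-h_1/k_1)$ produces the term $-k_1/(k(k^2+k_1^2))$ in \eqref{eq3_16_5_1}. For the imaginary part, I compute
\[
\mathrm{Im}(\alpha_1)=\frac{1}{2k^2}-\frac{k_1^2}{k^2+k_1^2}\cdot\frac{k_1^2-k^2}{2k^2k_1^2}=\frac{(k^2+k_1^2)-(k_1^2-k^2)}{2k^2(k^2+k_1^2)}=\frac{1}{k^2+k_1^2},
\]
which matches \eqref{eq3_16_5_1} exactly.

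For $\alpha_2(h,k)$ the argument is identical after replacing the subscript $1$ by $2$ and noting that now $h/k<h_2/k_2$ with $h_2k-hk_2=1$, so $h_2/k_2-h/k=1/(kk_2)$; this changes the sign of the horizontal displacement and yields \eqref{eq3_16_5_2}. Before starting, I would briefly justify that $C(h,k)$ is indeed externally tangent to each of $C(h_1,k_1)$ and $C(h_2,k_2)$, invoking the preceding theorem together with the Farey property $hk_1-h_1k=h_2k-hk_2=1$.

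No real obstacle is anticipated; the only point that needs care is bookkeeping of signs (the imaginary difference $1/(2k^2)-1/(2k_1^2)$ can be positive or negative depending on whether $k>k_1$ or $k<k_1$, but the final formula is insensitive to this because the numerator $(k^2+k_1^2)-(k_1^2-k^2)$ collapses symmetrically). I would present the computation once for $\alpha_1$ in full, then state that $\alpha_2$ follows by the same computation \emph{mutatis mutandis}.
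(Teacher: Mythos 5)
Your proposal is correct and follows essentially the same route as the paper: both locate the tangency point as the point dividing the segment between the centers $c_{h,k}$ and $c_{h_1,k_1}$ in the ratio $r_{h,k}/(r_{h,k}+r_{h_1,k_1})$ and then use the Farey relations $hk_1-h_1k=1$, $h_2k-hk_2=1$ to simplify. You merely carry out explicitly the "direct computation" that the paper leaves to the reader, and your real and imaginary part calculations check out.
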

\begin{proof}Notice that the condition of tangency implies that
$$hk_1-kh_1=1,\quad kh_2-hk_2=1.$$

\begin{figure}[h]
\centering
\includegraphics[scale=0.9]{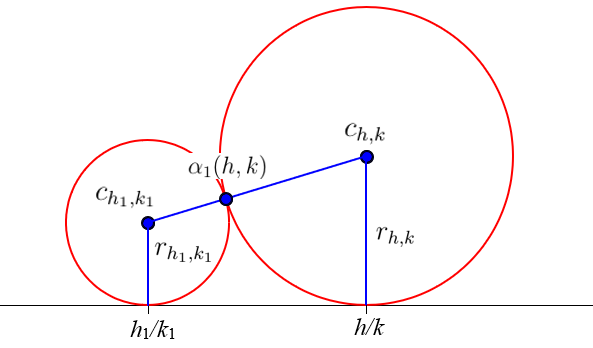}

\caption{The circles $C(a,b)$ and $C(c,d)$.\fa}\label{figure4}
\end{figure}

As shown in Figure \ref{figure4}, the point of tangency $\alpha_1(h,k)$ between $C(h_1, k_1)$ and $C(h,k)$ is the unique point on the line segment joining $c_{h_1, k_1}$ and $c_{h,k}$ such that 
$$\frac{c_{h,k}-\alpha_1(h,k)}{c_{h,k}-c_{h_1, k_1}}=\frac{r_{h,k}}{r_{h_1, k_1}+r_{h,k}}.$$ A direct computation proves \eqref{eq3_16_5_1}. 
Similarly, $\alpha_2(h,k)$ is the unique point on the line segment joining $c_{h,k}$ to $c_{h_2,k_2}$ such that
$$\frac{\alpha_2(h,k)-c_{h,k}}{c_{h_2,k_2}-c_{h, k}}=\frac{r_{h,k}}{r_{h_2, k_2}+r_{h,k}}.$$ 
This gives \eqref{eq3_16_5_2}.
\end{proof}

For each positive integer $N$, we construct a path  $P(N)$ joining the points $i$ and $1+i$ in the complex plane as follows. Let
$$0, \frac{h_1}{k_1}, \ldots, \frac{h_{m_N-1}}{k_{m_N-1}}, 1$$ be the Farey fractions in $F_N$ listed in increasing order. Let $$\frac{h_0}{k_0}=0\quad \text{and} \quad \frac{h_{m_N}}{k_{m_N}}=1.$$ For each $1\leq j\leq m_N-1$, let $A_j$ be the point of tangency between $C(h_{j-1},k_{j-1})$ and $C(h_j, k_j)$; and let $B_j$ be the point of tangency between $C(h_j, k_j)$ and $C(h_{j+1}, k_{j+1})$. Then $B_j=A_{j+1}$ for $1\leq j\leq m_N-2$. $A_j$ and $B_j$ divide the circle $C(h_j, k_j)$ into two arcs, the upper arc and the lower arc. Let $U_j$ be the upper arc. Finally, let $U_0$ be the arc of the circle $C(h_0, k_0)$ that joins the point $i$ to $A_1$ that is in the right half plane $\text{Re}\,z\geq 0$, and let $U_{m_N}$ be the arc of the circle $C(h_{m_N}, k_{m_N})$ that joins $B_{m_N-1}$ to the point $1+i$ that is in the left half-plane $\text{Re}\;z\leq 1$.
The path $P(N)$ is defined as
\begin{align}
\label{eq3_31_10}
P(N)=\bigcup_{j=0}^{m_N}U_j,
\end{align} following the orientation from the point $i$ to the point $1+i$.

\begin{figure}[h]
\centering
\includegraphics[scale=0.9]{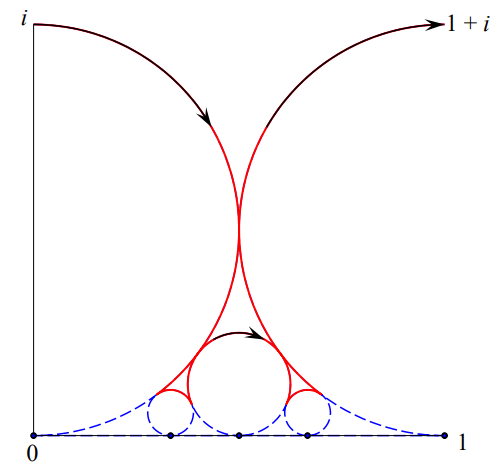}

\caption{The path $P(3)$ for the Farey series $\di F_3:  \frac{0}{1}, \frac{1}{3},   \frac{1}{2},  \frac{2}{3},  \frac{1}{1}$.\fa}\label{figure8}
\end{figure}

\vfill\pagebreak
\begin{figure}[h]
\centering
\includegraphics[scale=0.9]{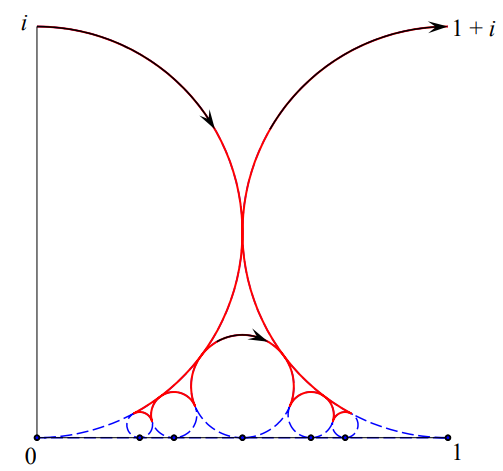}

\caption{The path $P(4)$ for the Farey series $\di F_4:  \frac{0}{1},  \frac{1}{4}, \frac{1}{3},   \frac{1}{2},  \frac{2}{3},\frac{3}{4},  \frac{1}{1}$.\fa}\label{figure9}
\end{figure}

\begin{figure}[h]
\centering
\includegraphics[scale=0.9]{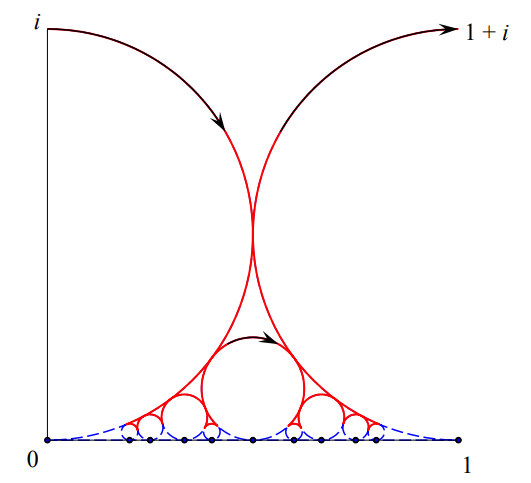}

\caption{The path $P(5)$ for the Farey series $\di F_5:  \frac{0}{1},\frac{1}{5}, \frac{1}{4}, \frac{1}{3}, \frac{2}{5}, \frac{1}{2}, \frac{3}{5},\frac{2}{3},\frac{3}{4},\frac{4}{5}, \frac{1}{1}$.\fa}\label{figure10}
\end{figure}

Let us study the image of $P(N)$ under the mapping $z=e^{2\pi i\tau}$.

\begin{lemma}\label{lemma2}Let $$\mathbb{D}=\left\{z\in\mathbb{C}\,|\,|z|<1\right\}$$ be the unit disc on the complex plane and let $$\mathcal{S}=\left\{\tau\in\mathbb{C}\,|\, 0\leq \text{Re}\,\tau\leq 1, \text{Im}\,\tau>0\right\}$$ be an infinite vertical strip on the upper half plane. 
The transformation 
$$z=e^{2\pi i \tau}$$ is a conformal mapping that maps the vertical strip $\mathcal{S}$ in the $\tau$-plane onto the punctured unit disc $\mathbb{D}\setminus\{0\}$ in the $z$-plane. The mapping is one-to-one on the interior of $\mathcal{S}$. The image of $P(N)$ under this mapping is a simple closed contour $\mathcal{C}$ that enclosed  the origin. 
\end{lemma}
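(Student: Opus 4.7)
The plan is to break the lemma into three assertions and handle each in turn. First, writing $\tau = u + iv$ with $0 \le u \le 1$ and $v > 0$, I would observe that $z = e^{-2\pi v} e^{2\pi i u}$, so that $|z| = e^{-2\pi v}$ sweeps out $(0,1)$ as $v$ ranges over $(0,\infty)$ while $\arg z = 2\pi u$ covers $[0, 2\pi]$; thus the image is precisely $\mathbb{D}\setminus\{0\}$. Conformality on the interior of $\mathcal{S}$ is immediate from the nonvanishing of the derivative $2\pi i e^{2\pi i\tau}$. For injectivity on $\mathcal{S}^{\circ}$, any two preimages of the same $z$ differ by a nonzero integer, which is incompatible with both real parts lying in $(0,1)$. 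On the boundary, however, the two vertical lines $\operatorname{Re}\tau = 0$ and $\operatorname{Re}\tau = 1$ get identified by periodicity.

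Next, I would verify that the image of $P(N)$ is a closed curve by noting that $e^{2\pi i\cdot i} = e^{-2\pi} = e^{2\pi i(1+i)}$, so the endpoints $i$ and $1+i$ map to a common point. To establish that the image is simple, the critical step is to confirm that the interior of $P(N)$ lies inside the open strip $\mathcal{S}^{\circ}$, where the map is one-to-one. The arcs $U_j$ with $1 \le j \le m_N - 1$ sit on Ford circles $C(h_j,k_j)$ with $0 < h_j/k_j < 1$ and radii $1/(2k_j^2) \le 1/8$, so they are strictly separated from the vertical boundary lines. The only delicate arcs are $U_0$, lying on $C(0,1)$ (center $i/2$, radius $1/2$), and $U_{m_N}$, lying on $C(1,1)$ (center $1 + i/2$, radius $1/2$). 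The circle $C(0,1)$ meets the imaginary axis exactly at $\tau = 0$ and $\tau = i$, and the right-half arc $U_0$ from $i$ to $A_1$ touches $\operatorname{Re}\tau = 0$ only at its starting point $\tau = i$. A mirror argument handles $U_{m_N}$. Consequently, the only identification that $z = e^{2\pi i\tau}$ induces among points of $P(N)$ is that of the two endpoints, so the image $\mathcal{C}$ is a simple closed contour.

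Finally, to show that $\mathcal{C}$ encloses the origin, I would compute the winding number of $\mathcal{C}$ about $0$. A continuous branch of $\arg z$ along $P(N)$ is given by $2\pi\operatorname{Re}\tau$, which equals $0$ at $\tau = i$ and $2\pi$ at $\tau = 1+i$; the net change along the path is therefore $2\pi$, so $\mathcal{C}$ has winding number $1$ about the origin. By the Jordan curve theorem, the origin lies in the bounded complementary component of $\mathcal{C}$. The main obstacle is the geometric verification that the interior of $P(N)$ avoids the two vertical boundary lines, which rests on the precise shape of the extremal Ford circles $C(0,1)$ and $C(1,1)$; once this is settled, the surjectivity, conformality, and winding-number arguments are routine complex analysis.
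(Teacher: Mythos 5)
Your proof is correct and follows essentially the same route as the paper: write $z=e^{2\pi i\tau}$ in polar form to get that $\mathcal{S}$ maps onto $\mathbb{D}\setminus\{0\}$ with injectivity failing only on the lines $\operatorname{Re}\tau=0,1$, then use the matching endpoints $i$ and $1+i$ together with injectivity on the open strip to conclude that $\mathcal{C}$ is a simple closed contour. You additionally supply two details the paper asserts without proof or leaves implicit---the geometric check that $P(N)$ minus its endpoints lies strictly inside the open strip (via the radii of the Ford circles and the extremal arcs on $C(0,1)$, $C(1,1)$), and the winding-number computation showing $\mathcal{C}$ encloses the origin---and both of these are sound.
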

\begin{proof}
\begin{figure}[h]
\centering
\includegraphics[scale=0.9]{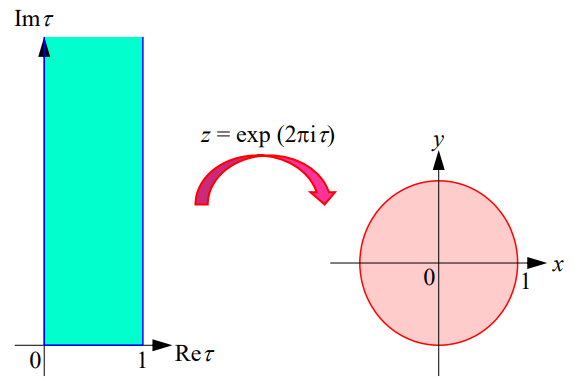}

\caption{The map $z=e^{2\pi i \tau}$.\fa}\label{figure7}
\end{figure}

The mapping 
$$z=e^{2\pi i \tau}$$ is clearly analytic, and so it is a conformal mapping. Let  $z=x+iy=re^{i\theta}$ and $\tau=\sigma+i\lambda$. Then we find that
$$re^{i\theta}=e^{2\pi i \sigma}e^{-2\pi \lambda}.$$ As $\lambda>0$, we find that
$$0<e^{-2\pi\lambda}<1.$$
On the other hand, as $\sigma$ varies from 0 to 1, $\theta=2\pi \sigma$ varies from 0 to $2\pi$. Hence, we see that the mapping transforms $\mathcal{S}$ onto $\mathbb{D}\setminus\{0\}$, and it is one-to-one except on the two vertical lines $\sigma=0$ and $\sigma=1$. 

Let $\mathcal{C}$ be the image of $P(N)$ under the map $z=e^{2\pi i\tau}$.
 Since the initial and end points of $P(N)$ are $i$ and $1+i$, and they are mapped to the same point in the unit disc $\mathbb{D}$, and so $\mathcal{C}$ is a closed curve. Except for the initial and end points, the rest of $P(N)$ lies in the interior of $\mathcal{S}$. Since $P(N)$ is a simple curve and the mapping $z=e^{2\pi i \tau}$ is one-to-one in the interior of $\mathcal{S}$, $\mathcal{C}$ is a simple closed curve.
\end{proof}

To derive the Rademacher's formula, we are extracting the coefficient $p(n)$ from the generating function $F(z)=F(e^{2\pi i \tau})$. As a function of $\tau$, $F(e^{2\pi i\tau})$ is periodic modulo 1. Hence, we can   extend the path $P(N)$ periodically modulo 1.

The Farey fractions in $F_N$ are given by
\[\frac{h_0}{k_0}, \frac{h_1}{k_1}, \ldots, \frac{h_{m_{N}-1}}{k_{m_N}-1}, \frac{h_{m_N}}{k_{m_N}},\]
where
\[\frac{h_0}{k_0}=\frac{0}{1},\quad\frac{h_1}{k_1}=\frac{1}{N},\quad \frac{h_{m_{N}-1}}{k_{m_N}-1}=\frac{N-1}{N}, \quad\frac{h_{m_N}}{k_{m_N}}=\frac{1}{1}.\]
Define
\[  \frac{h_{m_{N}+1}}{k_{m_N}+1}=\frac{h_1}{k_1}+1=\frac{N+1}{N}.\]

 By \eqref{eq3_16_5}, $U_0$ is an arc  joining the point $i$ to 
\[u_1=\frac{N+i}{N^2+1};\]and
$U_{m_N}$ is an arc joining the point 
\[u_2=\frac{N^2-N+1+i}{N^2+1}\] to $1+i$.  Translating $U_0$ by 1 and combine with $U_{m_N}$, we obtain a single arc of the circle $|z-1-i|=1/2$ joining $u_2$ to $u_1+1$.

Henceforth, we regard the path $P(N)$ as a path consists of the union of $m_N$ arcs $\widetilde{U}_1$, $\ldots$, $\widetilde{U}_{m_N}$, 
  corresponding to the fractions $h_j/k_j$ with $1\leq j\leq m_N$.
  \begin{align}\label{eq3_31_10_2}
P(N)=\bigcup_{j=1}^{m_N}\widetilde{U}_j,
\end{align} For $1\leq j\leq m_N-1$, $\widetilde{U}_j=U_j$; whereas $\widetilde{U}_{m_N}$ is the union of $U_{m_N}$ and $U_0+1$ that we have just described. If we take $h_{m_N-1}/k_{m_N-1}$, $h_{m_N}/k_{m_N}$ and $h_{m_N+1}/k_{m_N+1}$ as consecutive Farey fractions, $\widetilde{U}_{m_N}$ is the upper arc of the circle $C(h_{m_N}, k_{m_N})$ that joins the point $\alpha_1(h_{m_N}, k_{m_N})$ to $\alpha_2(h_{m_N}, k_{m_N})$ given by \eqref{eq3_16_5}.

Before ending this section, we study further properties of the arc $\widetilde{U}_j$ in the Ford circle $C(h_j,k_j)$ under the fractional linear transformation 
\[w=-ik^2\left(\tau-\frac{h}{k}\right).\]

\begin{theorem}\label{thm3_23_1}
The mapping
\begin{align}\label{eq3_23_1}
w=-ik^2\left(\tau-\frac{h}{k}\right)
\end{align}transforms the Ford circle $C(h,k)$ in the $\tau$-plane onto a circle $\tilde{C}$ in the $w$-plane. The center of $\tilde{C}$ is the point $w=1/2$ and the radius is $1/2$. If $h_1/k_1, h/k, h_2/k_2$ are three consecutive Farey fractions in  $F_N$,   $\alpha_1(h, k)$ is the point of tangency between $C(h_1, k_1)$ and $C(h,k)$, $\alpha_2(h, k)$ is the point of tangency between $C(h,k)$ and $C(h_2, k_2)$, the images of $\alpha_1(h,k)$ and $\alpha_2(h,k)$ under the mapping \eqref{eq3_23_1} are $w_1(h,k)$ and $w_2(h,k)$ given by
 \begin{equation}\label{eq3_23_2}\begin{split}
w_1(h,k)&=\frac{k^2}{k^2+k_1^2}+i\frac{kk_1}{k^2+k_1^2}, \\
w_2(h,k)&=\frac{k^2}{k^2+k_2^2}-i\frac{kk_2}{k^2+k_2^2}. 
\end{split}\end{equation}The upper arc on $C(h,k)$ joining $\alpha_1(h,k)$ to $\alpha_2(h,k)$ is mapped to the arc on $\tilde{C} $ joining $w_1(h,k)$ to $w_2(h,k)$ that does not touch the imaginary axis.

 \begin{figure}[h]
\centering
\includegraphics[scale=0.9]{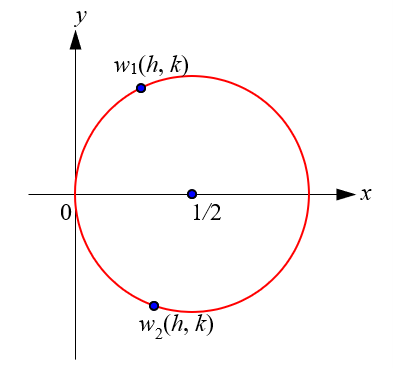}

\caption{The circle $\tilde{C}$ in the $w$-plane, which is the image of $C(h,k)$ in the $\tau$-plane under the transformation $\di w=-ik^2\left(\tau-\frac{h}{k}\right)$.\fa}\label{figure5}
\end{figure}
\end{theorem}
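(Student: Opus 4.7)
The plan is to verify each assertion by direct computation, exploiting the fact that the mapping $w=-ik^{2}(\tau-h/k)$ is affine: it translates by $-h/k$, rotates by $-\pi/2$, and scales by $k^{2}$. Since affine maps send circles to circles, the image $\tilde C$ of $C(h,k)$ is automatically a circle, and the remaining work is to identify its center, its radius, and the correspondence of specific points.

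First, I would parametrize the Ford circle as $\tau=h/k+i/(2k^{2})+(1/(2k^{2}))e^{i\theta}$ for $\theta\in[0,2\pi)$, so that $\tau-h/k=(i+e^{i\theta})/(2k^{2})$. Applying the map gives
\[
w=-ik^{2}\cdot\frac{i+e^{i\theta}}{2k^{2}}=\frac{1}{2}-\frac{i}{2}e^{i\theta}=\frac{1}{2}+\frac{1}{2}e^{i(\theta-\pi/2)},
\]
which identifies $\tilde C$ as the circle centered at $w=1/2$ with radius $1/2$.

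Next, for the tangency points I would substitute the explicit expressions \eqref{eq3_16_5} for $\alpha_{1}(h,k)-h/k$ and $\alpha_{2}(h,k)-h/k$ directly into $w=-ik^{2}(\tau-h/k)$. Using $(-i)(i)=1$ and separating real and imaginary parts, a two-line calculation yields exactly the formulas \eqref{eq3_23_2}. Observe in passing that $w_{1}(h,k)$ has positive imaginary part while $w_{2}(h,k)$ has negative imaginary part, so they sit on opposite halves of $\tilde C$.

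Finally, to identify the image of the upper arc I would track a distinguished point known to lie on it. The topmost point of $C(h,k)$, namely $\tau_{\text{top}}=h/k+i/k^{2}$, lies on the upper arc because its imaginary part $1/k^{2}$ strictly exceeds the imaginary parts $1/(k^{2}+k_{j}^{2})$ of the two tangency points. Its image is
\[
w=-ik^{2}\cdot\frac{i}{k^{2}}=1,
\]
diametrically opposite the origin on $\tilde C$. Since the bottom point $\tau=h/k$ of $C(h,k)$ maps to $w=0$, the imaginary axis is tangent to $\tilde C$ at the origin, and the arc of $\tilde C$ through $w=1$ is precisely the arc disjoint from the imaginary axis. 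Hence the upper arc from $\alpha_{1}(h,k)$ to $\alpha_{2}(h,k)$ maps onto the arc on $\tilde C$ from $w_{1}(h,k)$ to $w_{2}(h,k)$ that does not touch the imaginary axis. I do not expect any serious obstacle; the only subtlety is the arc-selection step, which is resolved by this single-point test.
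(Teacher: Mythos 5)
Your proposal is correct and follows essentially the same route as the paper: the paper also treats \eqref{eq3_23_1} as a circle-preserving transformation and leaves the center, radius, and the formulas \eqref{eq3_23_2} to direct verification, which you simply carry out explicitly via the parametrization of $C(h,k)$. The only cosmetic difference is the arc-selection step, where the paper observes that the real axis is mapped to the imaginary axis so the upper arc must land on the arc away from the imaginary axis, while you equivalently track the top point $h/k+i/k^{2}\mapsto 1$ and the bottom point $h/k\mapsto 0$ together with the signs of $\mathrm{Im}\,w_{1}$ and $\mathrm{Im}\,w_{2}$.
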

\begin{proof}
The mapping \eqref{eq3_23_1} is a  fractional linear transformation. So it maps a circle to a circle. It is easy to 
verify the assertions in the theorem. In particular, since the mapping maps the real axis to the imaginary axis, the image of the upper arc on $C(h,k)$ joining $\alpha_1(h,k)$ to $\alpha_2(h,k)$, is  the arc on $\tilde{C} $ joining $w_1(h,k)$ to $w_2(h,k)$ that is away from the imaginary axis.
\end{proof}

\begin{proposition}\label{prop4_3_1}Let $\tilde{C}$ be the circle on the complex plane with center at $1/2$ and radius $1/2$. 
For any $z\in \tilde{C}$, $\text{Re}\,z\geq 0$ and $$\text{Re}\,\left(\frac{1}{z}\right)=1.$$
\end{proposition}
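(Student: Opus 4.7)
The plan is to avoid any trigonometric parametrization and instead turn the circle equation $|z - \tfrac{1}{2}| = \tfrac{1}{2}$ into an algebraic identity that yields both conclusions at once.

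First, I would write the defining equation of $\tilde{C}$ as $|z - \tfrac{1}{2}|^{2} = \tfrac{1}{4}$ and expand. Since $|z - \tfrac{1}{2}|^{2} = (z - \tfrac{1}{2})(\bar{z} - \tfrac{1}{2}) = |z|^{2} - \tfrac{1}{2}(z + \bar{z}) + \tfrac{1}{4} = |z|^{2} - \operatorname{Re} z + \tfrac{1}{4}$, setting this equal to $\tfrac{1}{4}$ gives the key identity
\begin{equation*}
|z|^{2} = \operatorname{Re} z \qquad \text{for every } z \in \tilde{C}.
\end{equation*}
This immediately yields $\operatorname{Re} z \geq 0$, with equality exactly when $z = 0$ (the unique point of $\tilde{C}$ on the imaginary axis).

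Next, I would deduce the second assertion for $z \neq 0$ by writing $1/z = \bar{z}/|z|^{2}$, so that
\begin{equation*}
\operatorname{Re}\!\left(\frac{1}{z}\right) \;=\; \frac{\operatorname{Re} \bar{z}}{|z|^{2}} \;=\; \frac{\operatorname{Re} z}{|z|^{2}} \;=\; 1,
\end{equation*}
where the last equality uses the key identity. The point $z = 0$ is the single excluded point of $\tilde{C}$, where $1/z$ is undefined; the statement $\operatorname{Re}(1/z) = 1$ is to be read on $\tilde{C} \setminus \{0\}$, which I would note explicitly.

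There is no real obstacle here; the only thing to be careful about is acknowledging that $z = 0$ lies on $\tilde{C}$ and must be excluded when inverting. Once the identity $|z|^{2} = \operatorname{Re} z$ is in hand, both claims are one-line consequences.
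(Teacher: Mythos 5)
Your proof is correct, and it takes a slightly different route from the paper. The paper proves $\operatorname{Re} z\geq 0$ from $(x-\tfrac12)^2+y^2=\tfrac14$ and then establishes $\operatorname{Re}(1/z)=1$ by the trigonometric parametrization $z=\tfrac12\bigl(1+e^{i\theta}\bigr)$, computing $\operatorname{Re}(1/z)=\dfrac{2(1+\cos\theta)}{(1+\cos\theta)^2+\sin^2\theta}=1$. You instead expand $\bigl|z-\tfrac12\bigr|^2=\tfrac14$ into the single algebraic identity $|z|^2=\operatorname{Re} z$, from which both assertions fall out at once via $\operatorname{Re}(1/z)=\operatorname{Re} z/|z|^2$. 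Your version is a bit cleaner and has the added merit of making explicit that $z=0$ lies on $\tilde{C}$ and must be excluded from the second assertion; the paper's parametrized formula silently degenerates to $0/0$ at $\theta=\pi$, so your remark about reading the identity on $\tilde{C}\setminus\{0\}$ is exactly the right caveat (and is in fact how the proposition is used later, since the integrand there involves $1/z$ only away from the origin).
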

\begin{proof}
If $z=x+iy$ is a point on $\tilde{C}$,
$$\left(x-\frac{1}{2}\right)^2+y^2=\frac{1}{4}.$$ This implies that
$$\left|x-\frac{1}{2}\right|\leq\frac{1}{2}$$ and hence $$x\geq 0.$$
On the other hand, $z$ can be written as
$$z=\frac{1}{2}(1+e^{i\theta})$$ for some $\theta\in [0,2\pi]$. It follows that $$\text{Re}\,\left\{\frac{1}{z}\right\}=
\frac{ 2(1+\cos\theta)}{(1+\cos\theta)^2+\sin^2\theta}=1.$$  
\end{proof}
\begin{theorem}\label{thm4_1_4} 
Let $h_1/k_1, h/k, h_2/k_2$ be three consecutive Farey fractions in   $F_N$, and let 
  $w_1(h,k)$ and $w_2(h,k)$ be the points given in \eqref{eq3_23_2} in Theorem \ref{thm3_23_1}. The moduli of these points are given by
\begin{align}\label{eq3_23_3}
|w_1(h,k)|=\frac{k}{\sqrt{k^2+k_1^2}},\hspace{1cm}|w_2(h,k)|=\frac{k}{\sqrt{k^2+k_2^2}}.
\end{align}For any point $w$ on the chord joining $w_1(h,k)$ and $w_2(h,k)$,  
$$|w|\leq\frac{\sqrt{2}k}{N+1}.$$ Hence, the length the chord is not larger than
$2\sqrt{2}k/(N+1)$. On the other hand, the real part of $w$ is bounded below by $k^2/(2N^2)$, and hence,
\begin{align*}
\text{Re}\,\left\{\frac{1}{w}\right\}>  \frac{1}{4}.
\end{align*}
\end{theorem}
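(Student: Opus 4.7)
The plan is to handle the four assertions (moduli, chord-modulus bound, real-part bound, and the bound on $\operatorname{Re}(1/w)$) in turn. The essential input, beyond direct algebra, is a consequence of the definition of $F_N$: if $h_1/k_1$ and $h/k$ are consecutive in $F_N$, then their mediant $(h_1+h)/(k_1+k)$ is a reduced fraction (since $\gcd(h_1+h,k_1+k)$ divides $k_1(h_1+h)-h_1(k_1+k)=k_1h-h_1k=1$) and by Lemma \ref{lemma3_16_2} lies strictly between them, so it would belong to $F_N$ if its denominator did not exceed $N$. Hence $k+k_1\ge N+1$, and similarly $k+k_2\ge N+1$. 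This is the arithmetic fact that drives everything.

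For the moduli in \eqref{eq3_23_3}, I would simply square the real and imaginary parts of $w_1(h,k)$ and $w_2(h,k)$ from \eqref{eq3_23_2} and collect terms, obtaining $|w_1|^2=k^2/(k^2+k_1^2)$ and $|w_2|^2=k^2/(k^2+k_2^2)$. Next, combining the QM--AM inequality $k^2+k_1^2\ge (k+k_1)^2/2$ with $k+k_1\ge N+1$ yields $\sqrt{k^2+k_1^2}\ge (N+1)/\sqrt{2}$, so $|w_1(h,k)|\le \sqrt{2}\,k/(N+1)$; the same argument handles $|w_2(h,k)|$.

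For any point $w$ on the chord from $w_1(h,k)$ to $w_2(h,k)$, I parametrise $w=tw_1+(1-t)w_2$ with $t\in[0,1]$. The triangle inequality gives $|w|\le \max\{|w_1|,|w_2|\}\le \sqrt{2}\,k/(N+1)$, and hence the chord length $|w_1-w_2|\le 2\sqrt{2}\,k/(N+1)$. For the real part I note that
\[
\operatorname{Re}w_1(h,k)=\frac{k^2}{k^2+k_1^2},\qquad \operatorname{Re}w_2(h,k)=\frac{k^2}{k^2+k_2^2},
\]
and since $k,k_1,k_2\le N$, each denominator is bounded above by $2N^2$, giving $\operatorname{Re}w_j(h,k)\ge k^2/(2N^2)$. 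Linearity then yields $\operatorname{Re}w\ge k^2/(2N^2)$ on the chord.

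Finally, for $\operatorname{Re}(1/w)=\operatorname{Re}(w)/|w|^2$, the two bounds just established combine to give
\[
\operatorname{Re}\!\left(\frac{1}{w}\right)\ge \frac{k^2/(2N^2)}{2k^2/(N+1)^2}=\frac{(N+1)^2}{4N^2}>\frac{1}{4}.
\]
The main obstacle is really just the first one: recognising that Farey-consecutiveness in $F_N$ forces $k+k_1\ge N+1$ and $k+k_2\ge N+1$ via the mediant argument; everything else is convexity on the chord together with elementary inequalities.
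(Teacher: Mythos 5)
Your proposal is correct and follows essentially the same route as the paper: the mediant argument forcing $k+k_1\ge N+1$ and $k+k_2\ge N+1$, the inequality $k^2+k_1^2\ge (k+k_1)^2/2$, convex parametrisation of the chord with the triangle inequality, and the combination $\operatorname{Re}(1/w)=\operatorname{Re}(w)/|w|^2$ of the two bounds. Your extra remark that the mediant is automatically reduced (via $hk_1-h_1k=1$) is a nice touch the paper omits, but it changes nothing essential.
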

\begin{proof}
Eq. \eqref{eq3_23_3} is easily verified. 
To prove the second assertion, notice that if $k+k_1\leq N$, then 
$(h+h_1)/(k+k_1)$ is a Farey fraction in $F_n$ and it lies between $h_1/k_1$ and $h/k$. This contradicts to $h_1/k_1$ and $h/k$ are consecutive in $F_N$. Therefore, we must have $k+k_1\geq N+1$. Similarly, $k+k_2\geq N+1$. Hence, 
\begin{align*}
 k^2+k_1^2 \geq \frac{ (k+k_1)^2}{2}\geq\frac{(N+1)^2}{2}.
\end{align*}Similarly,
$$k^2+k_2^2\geq \frac{(N+1)^2}{2}.$$

If $w$ is on the chord joining $w_1(h,k)$ and $w_2(h,k)$, then there is a real number $t$ in the interval $[0,1]$ such that
\begin{align*}
w=tw_1(h,k)+(1-t)w_2(h,k).
\end{align*}It follows that
\begin{align*}
|w| 
&\leq  t|w_1(h,k)|+(1-t)|w_2(h,k)|\\
&=t\frac{  k}{\sqrt{k^2+k_1^2}}+(1-t)\frac{k}{\sqrt{k^2+k_2^2}}\\
&\leq  \frac{\sqrt{2}tk}{N+1}+\frac{\sqrt{2}(1-t)k}{N+1}\\
&=\frac{\sqrt{2}k}{N+1}.
\end{align*} By triangle inequality, the length of the chord is not larger than $|w_1|+|w_2|$, and hence, it is not larger than $2\sqrt{2}k/(N+1)$. 

Now, for the assertion about the real part, 
we use the fact that $k, k_1, k_2$ are all not larger than $N$. Then
\begin{align*}
\text{Re}\,w&=t\text{Re}\,w_1(h,k)+(1-t)\text{Re}\,w_2(h,k)\\
&=\frac{tk^2}{k^2+k_1^2}+\frac{(1-t)k^2}{k^2+k_2^2}\\
&\geq t\frac{k^2}{2N^2}+(1-t)\frac{k^2}{2N^2}\\
&=  \frac{k^2}{2N^2}.
\end{align*}
It follows that 
\begin{align*}
\text{Re}\,\left\{\frac{1}{w}\right\}=\frac{\text{Re}\,w}{|w|^2}> \frac{k^2}{2N^2}\times \frac{N^2}{2k^2}=\frac{1}{4}.
\end{align*}

\end{proof}
Finally, we have the following elementary proposition.
\begin{proposition}\label{prop4_4_1}
\label{cor4_3_1} Let $C$ be the circle with center at $1/2$ and radius $1/2$. Given a point $w$ on $C$, let  $\mathcal{W}$ be the minor arc joining the point 0 to the point $w$. Then $\ell(\mathcal{W})$, the arc-length of $\mathcal{W}$, is bounded above by
$ \pi|w|/2.$ For any $z$ on $\mathcal{W}$, $|z|\leq |w|$.
\end{proposition}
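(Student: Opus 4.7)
The proposition compares an arc length and a chord length on the circle $\tilde C$ from Proposition \ref{prop4_3_1}, so the natural move is to parametrize $\tilde C$ explicitly and reduce everything to a one-variable trigonometric inequality. I would write a point of $\tilde C$ as
\[
z(\psi)=\tfrac{1}{2}\bigl(1+e^{i\psi}\bigr),\qquad \psi\in[-\pi,\pi],
\]
so that the origin corresponds to $\psi=\pm\pi$. A direct computation using the half-angle identity $1+\cos\psi=2\cos^{2}(\psi/2)$ gives the clean formula $|z(\psi)|=\bigl|\cos(\psi/2)\bigr|$, which is the one identity doing all the work below. By reflecting across the real axis if necessary (which preserves both $|w|$ and $\ell(\mathcal W)$), there is no loss of generality in assuming $w=z(\psi_{0})$ with $\psi_{0}\in[0,\pi]$; the minor arc $\mathcal W$ from $0$ to $w$ is then the image of $\psi\in[\psi_{0},\pi]$.

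For the pointwise bound, I would simply observe that on this parameter interval $\psi/2$ lies in $[\psi_{0}/2,\pi/2]$, where cosine is nonnegative and monotonically decreasing. Hence
\[
|z(\psi)|=\cos(\psi/2)\le\cos(\psi_{0}/2)=|w|\qquad\text{for every }z\in\mathcal W,
\]
which is the second assertion. For the arc-length bound, $|z'(\psi)|=1/2$, so
\[
\ell(\mathcal W)=\int_{\psi_{0}}^{\pi}\tfrac{1}{2}\,d\psi=\frac{\pi-\psi_{0}}{2}.
\]
Setting $\alpha=(\pi-\psi_{0})/2\in[0,\pi/2]$ turns the desired inequality $\ell(\mathcal W)\le\pi|w|/2$ into
\[
\alpha\le\frac{\pi}{2}\sin\alpha,\qquad\alpha\in[0,\pi/2],
\]
after using $|w|=\cos(\psi_{0}/2)=\sin\alpha$.

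The only nontrivial step is therefore this last inequality, which is Jordan's inequality $\sin\alpha\ge 2\alpha/\pi$ on $[0,\pi/2]$. I would dispatch it in one line by concavity: the chord of $\sin$ joining $(0,0)$ and $(\pi/2,1)$ is exactly the line $y=2\alpha/\pi$, and since $\sin$ is concave on $[0,\pi/2]$ (as $-\sin\le 0$ there), the graph lies above the chord. This is the "main obstacle", though it is really just a well-known textbook inequality; the rest of the argument is routine once the parametrization $|z(\psi)|=|\cos(\psi/2)|$ is in hand.
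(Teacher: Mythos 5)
Your proof is correct and follows essentially the same route as the paper: the same parametrization $z=\tfrac12(1+e^{i\theta})$, the identity $|w|=\cos(\theta_w/2)$, the arc length as half the central angle, and Jordan's inequality $\sin t\ge 2t/\pi$ on $[0,\pi/2]$ (which the paper simply calls elementary, while you supply the one-line concavity argument). No gaps; the differences are purely notational.
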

\begin{proof}
\begin{figure}[h]
\centering
\includegraphics[scale=0.85]{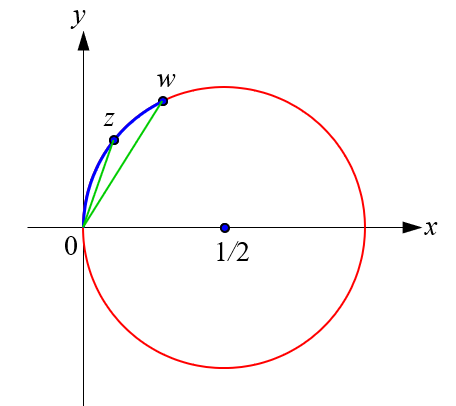}

\caption{The circle $C$.\fa}\label{figure6}
\end{figure}
Any point $z$ on $C$ can be written as
\begin{align}
\label{eq4_4_1} z=\frac{1}{2}\left(1+e^{i\theta}\right)\end{align} for some $\theta\in [0, 2\pi]$. Let 
$$ w=\frac{1}{2}\left(1+e^{i\theta_w}\right).$$ By symmetry of $C$, we can assume that $\theta_w \in [0, \pi)$. 
Then $$|w|=\frac{1}{2}\sqrt{(1+\cos\theta_w)^2+\sin^2\theta_w}=\frac{1}{2}\sqrt{2+2\cos\theta_w}=\cos\frac{\theta_w}{2}.$$
The arc-length of $\mathcal{W}$ is 
$$\ell(\mathcal{W})=\frac{1}{2}\alpha,$$ where  $\alpha=\pi-\theta_w$ is the central angle of  $\mathcal{W}$.
 It is  elementary  to show that if $0< t \leq\pi/2$,
$$\frac{2}{\pi}\leq\frac{\sin t}{t}\leq 1.$$
Therefore,
\begin{align*}
\frac{\ell(\mathcal{W})}{|w|}=\frac{\alpha}{\di 2\sin \frac{\alpha}{2}}\leq\frac{\pi}{2}.
\end{align*}
This proves the first assertion.

Now, if $z$ is a point on $ \mathcal{W}$, then $z$ is given by \eqref{eq4_4_1} for some $\theta\in [\theta_w, \pi]$.
If follows that
$$|z|=\cos\frac{\theta}{2}\leq \cos\frac{\theta_w}{2}\leq |w|.$$

\end{proof}

\bigskip
\section{Rademacher's Convergent Series}

In this section, we will prove the Rademacher's Theorem \ref{thmrademacher}, which we state again here.
\begin{theorem}\label{thmrademacher2}[\textbf{Rademacher's Formula \cite{Rademacher1937}}]~

If $n\geq 1$, the partition function $p(n)$ is represented by the convergent series
\begin{align}\label{eq4_24_2}p(n)=\sum_{k=1}^{\infty}R_k(n),\end{align} where
\begin{align*}
R_k(n)=\frac{1}{\pi\sqrt{2}}A_k(n)\sqrt{k}\frac{d}{dn}\left(\frac{\di\sinh\left\{\frac{\pi}{k}\sqrt{\frac{2}{3}\left(n-\frac{1}{24}\right)}\right\}}{\di\sqrt{n-\frac{1}{24}}}\right),
\end{align*}
\begin{gather*}
A_k(n)=\sum_{\substack{1\leq h\leq k\\(h,k)=1 }}\exp\left(\pi i s(h,k)-\frac{2\pi i nh}{k}\right),\end{gather*} and $s(h,k)$ is the Dedekind sum defined by
\begin{gather}\label{eq3_31_9}
s(h,k)=\sum_{r=1}^{k-1}\frac{r}{k}\left(\frac{hr}{k}-\left\lfloor\frac{hr}{k}\right\rfloor-\frac{1}{2}\right).
\end{gather}
\end{theorem}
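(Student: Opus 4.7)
The strategy is the Hardy--Ramanujan--Rademacher circle method. Cauchy's theorem applied to the generating function \eqref{eq3_23_4} along the closed contour $\mathcal{C}$ of Lemma \ref{lemma2} gives
$$p(n)=\frac{1}{2\pi i}\oint_{\mathcal{C}}\frac{F(z)}{z^{n+1}}\,dz,$$
which, after pulling back via $z=e^{2\pi i\tau}$, becomes
$$p(n)=\int_{P(N)}F(e^{2\pi i\tau})e^{-2\pi in\tau}\,d\tau=\sum_{j=1}^{m_N}\int_{\widetilde{U}_j}F(e^{2\pi i\tau})e^{-2\pi in\tau}\,d\tau.$$
The left side is independent of $N$. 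The plan is to isolate, on the arc $\widetilde{U}_j$ associated to $h_j/k_j$, a principal contribution that sums to $R_{k_j}(n)$ and an error contribution whose total tends to $0$ as $N\to\infty$; passing to the limit will then yield the convergent series \eqref{eq4_24_2}.

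On the arc $\widetilde{U}_j$ associated to $h/k=h_j/k_j$, I would change variables by $w=-ik^2(\tau-h/k)$; by Theorem \ref{thm3_23_1} this carries $\widetilde{U}_j$ bijectively onto the arc of $\tilde{C}$ from $w_1(h,k)$ to $w_2(h,k)$ that stays away from the imaginary axis. Writing $F(e^{2\pi i\tau})=e^{\pi i\tau/12}/\eta(\tau)$ and applying the Dedekind functional equation (Theorem \ref{thm4_5_2}) to the matrix
$$M=\begin{pmatrix} h' & -(1+hh')/k \\ k & -h\end{pmatrix}\in\mathrm{SL}(2,\mathbb{Z}),\qquad hh'\equiv -1\pmod{k},$$
a direct calculation gives $M\tau=h'/k+i/w$ and $-i(k\tau-h)=w/k$, and after collecting the Dedekind-sum phase one obtains the clean identity
$$F(e^{2\pi i\tau})e^{-2\pi in\tau}=e^{\pi is(h,k)-2\pi inh/k}\left(\frac{w}{k}\right)^{1/2}\exp\!\left(\frac{\pi}{12w}+\frac{\pi(24n-1)w}{12k^2}\right)F(e^{2\pi iM\tau}).$$
Summing the arithmetic phase over reduced residues $h\pmod{k}$ reproduces the factor $A_k(n)$ in the statement.

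Next I would split $F(e^{2\pi iM\tau})=1+\bigl(F(e^{2\pi iM\tau})-1\bigr)$ and treat the two pieces separately. For the principal piece, the substitution $t=\pi/(12w)$ is decisive: by Proposition \ref{prop4_3_1}, $\tilde{C}$ is mapped conformally onto the vertical line $\mathrm{Re}\,t=\pi/12$, and the arc from $w_1$ to $w_2$ is sent to a finite segment of that line. With $x=(\pi/k)\sqrt{(2/3)(n-1/24)}$, the $w$-integrand transforms into a constant multiple of $t^{-5/2}\exp(t+x^2/(4t))$, whose integral along the full vertical line is precisely the Bessel contour integral \eqref{eq4_6_2} for $\nu=3/2$. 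Completing the segment to the full line and then substituting the closed form of Proposition \ref{prop4_13_1} for $I_{3/2}(x)$, followed by converting $d/dx$ to $d/dn$ via the chain rule (using $dx/dn=\pi^2/(3k^2x)$), yields exactly $R_k(n)$ after summation over $h$.

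The main obstacle is showing that the two error sources aggregate to a quantity that vanishes as $N\to\infty$. The first error comes from completing each finite segment $[t_1,t_2]$ to the full line $\mathrm{Re}\,t=\pi/12$; the estimates $|w_j(h,k)|\leq\sqrt{2}k/(N+1)$ of Theorem \ref{thm4_1_4} translate into $|t_j|\geq\pi(N+1)/(12\sqrt{2}k)$, so each tail integral is controlled by a power of $k/N$ via the decay of $|t|^{-5/2}$ on the line. The second error comes from the remainder $F(e^{2\pi iM\tau})-1=\sum_{m\geq 1}p(m)e^{2\pi imM\tau}$, dominated by $Ce^{-2\pi\,\mathrm{Re}(1/w)}$ along the integration path; by Theorem \ref{thm4_1_4} one has $\mathrm{Re}(1/w)>1/4$ on the relevant arcs (and equals $1$ on $\tilde{C}$ itself), giving uniform exponential decay, while Proposition \ref{prop4_4_1} bounds the arc length by $\pi|w|/2\leq \pi\sqrt{2}k/[2(N+1)]$. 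Summing the resulting bounds over $1\leq h\leq k$ with $(h,k)=1$ and over $1\leq k\leq N$ produces a total error that is $O(N^{-1/2})$; letting $N\to\infty$ therefore both establishes convergence of the series $\sum_{k\geq 1}R_k(n)$ and shows that its value is $p(n)$.
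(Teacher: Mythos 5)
Your overall route is the same as the paper's: Cauchy's formula over the image of the Rademacher path $P(N)$, the change of variables $w=-ik^2(\tau-h/k)$ onto the circle $\tilde{C}$ (Theorem \ref{thm3_23_1}), the eta-transformation of Theorem \ref{thm4_1_1}, the split $F=1+G$ with $G=F-1$, evaluation of the principal piece as the Bessel integral \eqref{eq4_6_2} with $\nu=3/2$ followed by Proposition \ref{prop4_13_1}, and an $O(N^{-1/2})$ total error. Your first error estimate (completing the finite segment on $\mathrm{Re}\,t=\pi/12$ to the full line and using the decay of $|t|^{-5/2}$) is just the paper's bound on the short arcs $\mathcal{W}_1,\mathcal{W}_2$ transported to the $t$-plane, and it works.

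The gap is in your second error term, the contribution of $G$. That piece is integrated over the arc of $\tilde{C}$ from $w_1(h,k)$ to $w_2(h,k)$ that stays away from the imaginary axis, i.e. the long arc through $w=1$, whose length is comparable to $\pi$ and on which $|w|$ is of order $1$. It is not one of the short arcs covered by Proposition \ref{prop4_4_1}, and the bounds of Theorem \ref{thm4_1_4} ($|w|\le \sqrt{2}k/(N+1)$ and $\mathrm{Re}(1/w)>1/4$) are stated for the chord joining $w_1$ and $w_2$, not for this arc; so the inequality ``arc length $\le \pi|w|/2\le \pi\sqrt{2}k/(2(N+1))$'' does not apply to your integration path. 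A direct triangle-inequality estimate on the long arc (where $\mathrm{Re}(1/w)=1$, so $|G|$ is merely a constant) gives only $O(k^{-5/2})$ per pair $(h,k)$ with no $N$-dependence, hence a total of order $\sum_{k\le N}k^{-3/2}=O(1)$, which does not tend to $0$ and therefore does not yield $p(n)=\sum_{k}R_k(n)$. The missing idea is the contour deformation the paper performs for $\mathcal{B}_{h,k}$: the integrand containing $G$ is analytic in the lens-shaped region between the long arc and the chord (this region avoids $z=0$), so Cauchy's theorem lets you replace the arc by the chord; only on the chord do the estimates $|w|\le\sqrt{2}k/(N+1)$, chord length $\le 2\sqrt{2}k/(N+1)$, and $\mathrm{Re}(1/w)>1/4$ hold simultaneously, and they are what produce the $O(N^{-1/2})$ bound. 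With that deformation inserted, your argument coincides with the paper's proof.
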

We break the proof into a few lemmas.

\begin{lemma}
\label{lemma1}
Let 
\begin{align}\label{eq3_31_7}F(z)=\prod_{m=1}^{\infty}\frac{1}{1-z^m},\end{align} and let $\mathcal{C}$ be any positively oriented simple closed curve in the unit disc $\mathbb{D}$ surrounding the point $z=0$. Given a positive integer integer $n$, 
\begin{align*}
p(n)=\frac{1}{2\pi i}\oint_{\mathcal{C}}\frac{F(z)}{z^{n+1}}dz.
\end{align*}
\end{lemma}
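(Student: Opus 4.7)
The plan is to identify this statement as a standard application of Cauchy's integral formula for the Taylor coefficients of a holomorphic function, combined with a contour deformation argument.

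First I would verify that $F(z)$ is holomorphic on the open unit disc $\mathbb{D}$. For $|z| < 1$, the series $\sum_{m=1}^{\infty}|z|^m$ converges, which implies that the infinite product $\prod_{m=1}^{\infty}(1-z^m)^{-1}$ converges absolutely and uniformly on compact subsets of $\mathbb{D}$. Hence $F$ is holomorphic on $\mathbb{D}$, and by \eqref{eq3_23_4} its Taylor expansion about the origin is precisely $\sum_{n=0}^{\infty} p(n) z^n$. In particular, $F(z)/z^{n+1}$ is holomorphic on the punctured disc $\mathbb{D} \setminus \{0\}$, with a pole of order at most $n+1$ at the origin.

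Next, for any fixed $r$ with $0 < r < 1$, Cauchy's integral formula for Taylor coefficients yields
\[
p(n) = \frac{1}{2\pi i} \oint_{|z|=r} \frac{F(z)}{z^{n+1}}\, dz,
\]
where the circle is traversed positively. The standard justification is to substitute the uniformly convergent series $F(z) = \sum_{j=0}^{\infty} p(j) z^j$, interchange sum and integral (justified by uniform convergence on $|z|=r$), and use $\frac{1}{2\pi i}\oint_{|z|=r} z^{j-n-1}\,dz = \delta_{j,n}$.

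Finally, to promote this to an arbitrary positively oriented simple closed curve $\mathcal{C} \subset \mathbb{D}$ surrounding the origin, I would invoke Cauchy's theorem applied to $F(z)/z^{n+1}$, which is holomorphic on $\mathbb{D}\setminus\{0\}$. Choosing $r > 0$ small enough that the circle $|z|=r$ lies in the interior region bounded by $\mathcal{C}$, the two curves are homologous in $\mathbb{D}\setminus\{0\}$, so their integrals of $F(z)/z^{n+1}$ coincide. This yields the claimed identity. There is no real obstacle here; the only points requiring care are the uniform convergence needed to interchange sum and integral, and the standard contour-deformation argument in the doubly-connected domain $\mathbb{D}\setminus\{0\}$.
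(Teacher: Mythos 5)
Your proposal is correct and follows essentially the same route as the paper: establish that $F$ is holomorphic on $\mathbb{D}$ with Taylor coefficients $p(n)$, then apply Cauchy's integral formula for the coefficients. The only difference is that you spell out the uniform-convergence interchange and the deformation from a small circle to the general contour $\mathcal{C}$, details the paper leaves implicit.
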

\begin{proof}
The infinite product in \eqref{eq3_31_7} converges absolutely when $|z|<1$. Hence, $F(z)$ is an analytic function on the unit disc $\mathbb{D}$. Since the Taylor series of $F(z)$ is given by
\begin{align*}
F(z)=\sum_{n=0}^{\infty}p(n)z^n.
\end{align*}By Cauchy integral formula,
\begin{align*}
p(n)=\frac{1}{2\pi i}\oint_{\mathcal{C}}\frac{F(z)}{z^{n+1}}dz.
\end{align*}
\end{proof}

As we have mentioned in the proof of Lemma \ref{lemma1}, the function $F(z)$ is analytic in the unit disc $\mathbb{D}$. By the product expansion, we see that it has singularities at the points $e^{2\pi i h/k}$, where $h$ and $k$ are positive integers. 
In the following theorem, we present a transformation formula for $F(z)$ near a singularity point $e^{2\pi i h/k}$.

\begin{theorem}\label{thm4_1_1}
Let \begin{align}\label{eq3_31_7_2}F(z)=\prod_{m=1}^{\infty}\frac{1}{1-z^m},\end{align} and let $h$ and $k$ be integers with $1\leq h\leq k$ and $(h,k)=1$. Let $H$ be a positive integer not larger than $k$ such that $hH\equiv -1\;\text{mod}\;k$.  Finally, let  $z$ be a complex number with $\text{Re}\,(z)>0$. Define
$$w=\exp\left(\frac{2\pi i h}{k}-\frac{2\pi z}{k^2}\right),\hspace{1cm}w'=\exp\left(\frac{2\pi i H}{k}-\frac{2\pi}{z}\right).$$  Then
\begin{align}
F(w)=e^{\pi i s(h,k)}\left(\frac{z}{k}\right)^{\frac{1}{2}}\exp\left(\frac{\pi}{12 z}-\frac{\pi z}{12k^2}\right)F(w'),
\end{align}where $s(h,k)$ is given by \eqref{eq3_31_9}. 
\end{theorem}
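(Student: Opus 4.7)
The plan is to reduce everything to Dedekind's functional equation (Theorem~\ref{thm4_5_2}) for the $\eta$-function. Comparing the product formulas \eqref{eq3_31_8} and \eqref{eq3_23_4} gives the link
\[ F(e^{2\pi i \tau}) \;=\; \frac{e^{\pi i \tau/12}}{\eta(\tau)}, \]
which I will exploit. Set $\tau = h/k + iz/k^2$ and $\tau' = H/k + i/z$, so that $w = e^{2\pi i \tau}$ and $w' = e^{2\pi i \tau'}$. Both $\tau$ and $\tau'$ lie in the upper half-plane because $\text{Re}(z) > 0$ forces $\text{Im}\,\tau = \text{Re}(z)/k^2 > 0$ and $\text{Im}\,\tau' = \text{Re}(z)/|z|^2 > 0$.

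Next I would exhibit a matrix in $\text{PSL}(2,\mathbb{Z})$ with $c > 0$ that carries $\tau$ to $\tau'$. Writing $\ell = (hH + 1)/k$---an integer, by the hypothesis $hH \equiv -1 \pmod{k}$---take $a = H$, $b = -\ell$, $c = k$, $d = -h$. The determinant is $-Hh + k\ell = 1$, and a direct calculation yields $c\tau + d = iz/k$ and $a\tau + b = -1/k + iHz/k^2$, so $(a\tau+b)/(c\tau+d) = H/k + i/z = \tau'$. This is precisely where the arithmetic condition on $H$ is used. Applying Theorem~\ref{thm4_5_2}, noting that $-i(c\tau + d) = z/k$ has positive real part (so the principal square root is unambiguous) and that $s(-d, c) = s(h, k)$, one obtains
\[ \eta(\tau') \;=\; \exp\!\left\{\pi i\!\left(\frac{H - h}{12k} + s(h, k)\right)\right\} \left(\frac{z}{k}\right)^{1/2} \eta(\tau). \]

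To finish, take the ratio $F(w)/F(w') = e^{\pi i(\tau - \tau')/12}\,\eta(\tau')/\eta(\tau)$, substitute the above, and expand
\[ \frac{\pi i(\tau - \tau')}{12} \;=\; \frac{\pi i(h-H)}{12k} - \frac{\pi z}{12 k^2} + \frac{\pi}{12 z}. \]
The $\pi i(H-h)/(12k)$ term coming from the $\eta$-transformation exactly cancels the $\pi i(h-H)/(12k)$ piece, and what remains collapses to the claimed formula. The whole calculation is routine modulo two delicate items: picking the right modular matrix---the only place where $hH \equiv -1 \pmod{k}$ enters---and keeping the correct square-root branch throughout. The real content of the argument is Theorem~\ref{thm4_5_2}; everything else is bookkeeping.
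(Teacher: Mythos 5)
Your proposal is correct and follows essentially the same route as the paper: the same identity $F(e^{2\pi i\tau})=e^{\pi i\tau/12}/\eta(\tau)$, the same modular matrix (your $\ell=(hH+1)/k$ is the paper's $K$ with $kK-hH=1$), the same application of Theorem \ref{thm4_5_2} with $-i(c\tau+d)=z/k$ and $s(-d,c)=s(h,k)$, and the same cancellation of the $\pi i(H-h)/(12k)$ terms. No gaps.
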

When $|z|$ is small, $w$ is a point that is close to the point $e^{2\pi i h/k}$, whereas $w'$ is a point that is close to the origin. Since $F(0)=1$, this theorem says that when $w$ is close to the singularity point $e^{2\pi i h/k}$,
\begin{align*}
F(w)\sim e^{\pi i s(h,k)}\left(\frac{z}{k}\right)^{\frac{1}{2}}\exp\left(\frac{\pi}{12 z} \right).
\end{align*}
\begin{proof}
Recall that 
$\di 
F(e^{2\pi i \tau})= e^{\frac{\pi i \tau}{12}}\eta(\tau)^{-1}$, where $\eta(\tau)$ is the Dedekind eta function \eqref{eq3_31_8}. It follows that
\begin{align*}
F(w)=\frac{e^{\frac{\pi i \tau}{12}}}{\eta(\tau)},
\hspace{1cm}\text{where}\;\;\tau = \frac{h}{k}+i\frac{z}{k^2}.\end{align*}Since
$$hH=-1\mod k,$$ there exists an integer $K$ such that
$$kK-hH=1.$$This implies that \[ \begin{bmatrix} H & -K\\ k & -h\end{bmatrix}\]is an element of the modular group $\Gamma=\text{SL}\,(2,\mathbb{Z})$. 
Notice that 
\begin{align*} 
\tau'&=\frac{H\tau-K}{k\tau-h}\\&=\frac{\di \frac{Hh-Kk}{k}+i\frac{Hz}{k^2}}{\di \frac{iz}{k}}\\
&=\frac{H}{k}+\frac{ i}{z}.
\end{align*}Hence, $w'=e^{2\pi i\tau'}$. The transformation formula for the Dedekind eta function Theorem \ref{thm4_5_2} says that
\begin{align*}
\eta(\tau)^{-1}=\eta(\tau')^{-1}\bigl\{-i(k\tau-h)\bigr\}^{1/2}\exp\left\{\pi i\left(\frac{H-h}{12k}+s(h,k)\right)\right\}.
\end{align*}It follows that
\begin{align*}
F(w)&=e^{\frac{\pi i\tau}{12}}\bigl\{-i(k\tau-h)\bigr\}^{1/2}\exp\left\{\pi i\left(\frac{H-h}{12k}+s(h,k)\right)\right\}e^{-\frac{\pi i \tau'}{12}}F(w')\\
&=\left(\frac{z}{k}\right)^{\frac{1}{2}}\exp\left\{\pi i\left(\frac{H-h}{12k}+s(h,k)+\frac{h-H}{12k}+\frac{iz}{12k^2}-\frac{i}{12z}\right)\right\}F(w')\\
&=e^{\pi i s(h,k)}\left(\frac{z}{k}\right)^{\frac{1}{2}}\exp\left(\frac{\pi}{12 z}-\frac{\pi z}{12k^2}\right)F(w').
\end{align*}
\end{proof}

\begin{proof}
[Proof of Theorem \ref{thmrademacher2}]
Fixed a positive integer $N$. We let $\mathcal{C}$ be the image of the path $P(N)$ defined by \eqref{eq3_31_10_2} under the map
$z=e^{2\pi i \tau}.$

By Lemma \ref{lemma1},
\begin{align*}
p(n)=\frac{1}{2\pi i}\oint_{\mathcal{C}}\frac{F(z)}{z^{n+1}}dz.
\end{align*}Making the change of variables $z=e^{2\pi i \tau}$, we have
\begin{align*}
p(n)&= \int_{P(N)} F(e^{2\pi i \tau})e^{-2\pi i n\tau}d\tau\\
&=\sum_{j=1}^{m_N}\int_{\widetilde{U}_j} F(e^{2\pi i \tau})e^{-2\pi i n\tau}d\tau.
\end{align*} Each $\widetilde{U}_j$ is the upper arc of the circle $C(h_j, k_j)$ with initial point $\alpha_1(h_j, k_j)$ and end point $\alpha_2(h_j, k_j)$, where
\begin{align*}
\alpha_1(h_j,k_j)&=\frac{h_j}{k_j}-\frac{k_{j-1}}{k_{j}(k_{j}^2+k_{j-1}^2)}+\frac{i}{k^2+k_{j-1}^2}, \\
\alpha_2(h_j,k_j)&=\frac{h_j}{k_j}+\frac{k_{j+1}}{k(k^2+k_{j+1}^2)}+\frac{i}{k^2+k_{j+1}^2}. 
\end{align*}

For the integral over the arc $\widetilde{U}_j$, we make a change of variables
$$z=-ik_j^2\left(\tau-\frac{h_j}{k_j}\right)$$ so that
$$\tau=\frac{h_j}{k_j}+i\frac{z}{k_j^2}.$$
Then
\begin{align}\label{eq4_1_2}
\int_{\widetilde{U}_j} F(e^{2\pi i \tau})e^{-2\pi i n\tau}d\tau&=\frac{i}{k^2}\int_{V_j}F\left(\exp\left(\frac{2\pi i h_j}{k_j}-\frac{2\pi z}{k_j^2}\right)\right)e^{-\frac{2\pi i n h_j}{k_j}}e^{\frac{2\pi n z}{k_j^2}}dz.
\end{align}Here  $V_j$ is the arc of the circle $\tilde{C}$ with center at $1/2$ and radius $1/2$, which do not touch the imaginary axis, and joining the points $w_1(h_j, k_j)$ and $w_2(h_j, k_j)$ given by
 \begin{equation}\label{eq4_1_1}\begin{split}
w_1(h_j,k_j)&=\frac{k_j^2}{k_j^2+k_{j-1}^2}+i\frac{k_jk_{j-1}}{k_j^2+k_{j-1}^2}, \\
w_2(h_j,k_j)&=\frac{k_j^2}{k_j^2+k_{j+1}^2}-i\frac{k_jk_{j+1}}{k_j^2+k_{j+1}^2}. 
\end{split}\end{equation}
Now we use the transformation formula given in Theorem \ref{thm4_1_1}, 
\begin{align}\label{eq4_1_3}
F\left(\exp\left(\frac{2\pi i h}{k}-\frac{2\pi z}{k^2}\right)\right)=e^{\pi i s(h,k)}\left(\frac{z}{k}\right)^{\frac{1}{2}}\exp\left(\frac{\pi}{12 z}-\frac{\pi z}{12k^2}\right)
F\left(\exp\left(\frac{2\pi i H}{k}-\frac{2\pi}{z}\right)\right),
\end{align}where   $H$ is a positive integer not larger than $k$ and such that $hH\equiv -1 $ mod $k$.

Let
\begin{align}\label{eq4_1_8}
G(z)=F(z)-1=\sum_{m=1}^{\infty}p(m)z^m.
\end{align}
Then \eqref{eq4_1_2} and \eqref{eq4_1_3} imply that
\begin{align*}
\int_{\widetilde{U}_j} F(e^{2\pi i \tau})e^{-2\pi i n\tau}d\tau&=\mathcal{A}_{h_j, k_j}+\mathcal{B}_{h_j,k_j},
\end{align*}where
\begin{align*}
\mathcal{A}_{h,k}&=\frac{i}{k^{5/2}}\int_{V}e^{\pi i s(h,k)-\frac{2\pi i n h}{k}} z^{\frac{1}{2}}\exp\left(\frac{\pi}{12 z}+\frac{2\pi n z}{k^2}-\frac{\pi z}{12k^2}\right)
 dz,\\
\mathcal{B}_{h,k}&=\frac{i}{k^{5/2}}\int_{V}e^{\pi i s(h,k)-\frac{2\pi i n h}{k}}z^{\frac{1}{2}}\exp\left(\frac{\pi}{12 z}+\frac{2\pi n z}{k^2}-\frac{\pi z}{12k^2}\right)
G\left(\exp\left(\frac{2\pi i H}{k}-\frac{2\pi}{z}\right)\right) dz.
\end{align*}
As $j$ runs from $1$ to $m_N$, $(h, k)$ runs through all pairs of positive integers with $1\leq h\leq k\leq N$ and $(h, k)=1$. Therefore,
\begin{align*}
p(n)=\sum_{k=1}^N\sum_{\substack{1\leq h\leq k\\(h,k)=1 }}\left(\mathcal{A}_{h,k}+\mathcal{B}_{h,k}\right).
\end{align*}
We estimate $\mathcal{B}_{h,k}$ first. Given $h/k $ in $F_N$,   let $W$ be the chord joining the point $w_1(h, k)$ to the point $w_2(h, k)$. Then $V-W$ is  a closed curve that does not enclose the origin. Since the integrand 
\begin{align}
\label{eq4_1_6}
\mathcal{G}(z,h,k,n)=\frac{i}{k^{5/2}} e^{\pi i s(h,k)-\frac{2\pi i n h}{k}}z^{\frac{1}{2}}\exp\left(\frac{\pi}{12 z}+\frac{2\pi n z}{k^2}-\frac{\pi z}{12k^2}\right)
G\left(\exp\left(\frac{2\pi i H}{k}-\frac{2\pi}{z}\right)\right)
\end{align} is analytic in an open set containing the region enclosed by the closed curve $V_j-W_j$, by residue theorem,
we have
$$\oint_{ V-W} \mathcal{G}(z,h,k,n)dz=0.$$ Hence,
\begin{align*}
\mathcal{B}_{h,k}&= \int_{W} \mathcal{G}(z,h,k,n)dz.
\end{align*}

For any $z\in W$, Theorem \ref{thm4_1_4} says that
\begin{align}
\label{eq4_1_7}
|z|\leq \frac{\sqrt{2}k}{N+1},\end{align}
and the arclength of $W$, $\ell(W)$, is not larger than $2\sqrt{2}k/(N+1)$. If we can find an $M$ such that 
\begin{align}\label{eq4_1_5}
\left| \mathcal{G}(z,h,k,n)\right|\leq M\hspace{1cm}\text{for all }\;z\in W,
\end{align}then
\begin{align*}
\left|\mathcal{B}_{h,k}\right|\leq M\ell(W)\leq \frac{2\sqrt{2}k}{N+1}M.
\end{align*}
Let us now find an $M$ satisfying \eqref{eq4_1_5}.  By definition \eqref{eq4_1_6}, 
\begin{align*}
\left|\mathcal{G}(z,h,k,n)\right|=\frac{1}{k^{5/2}}|z|^{\frac{1}{2}}\left|\exp\left(\frac{\pi}{12 z}+\frac{2\pi n z}{k}-\frac{\pi z}{12k^2}\right)\right|
\left|G\left(\exp\left(\frac{2\pi i H}{k}-\frac{2\pi}{z}\right)\right)\right|.
\end{align*}
The estimate of $|z|$ is given by \eqref{eq4_1_7}. Now,
\begin{align*}
&\left|\exp\left(\frac{\pi}{12 z}+\frac{2\pi n z}{k}-\frac{\pi z}{12k^2}\right)\right|\\&= \exp\left(\text{Re}\,\left\{\frac{\pi}{12 z}+\frac{2\pi n z}{k}-\frac{\pi z}{12k^2}\right\}\right) \\
&=\exp\left(\frac{\pi}{12}\text{Re}\,\left\{\frac{1}{z}\right\}\right)  \exp\left(\text{Re}\,\left\{\frac{2\pi n z}{k}\right\}\right)\exp\left(-\text{Re}\,\left\{\frac{\pi z}{12k^2}\right\}\right).
\end{align*}For $z\in W$, Proposition \ref{prop4_3_1} implies that $\text{Re}\,z>0$. Hence,
$$\exp\left(-\text{Re}\,\left\{\frac{\pi z}{12k^2}\right\}\right)\leq 1.$$On the other hand, 
\begin{align*}
\text{Re}\,\left\{\frac{2\pi n z}{k}\right\}\leq \frac{2\pi n}{k}|z|\leq \frac{2 \pi n}{k}\leq 2\pi n.
\end{align*}Therefore,
$$ \exp\left(\text{Re}\,\left\{\frac{2\pi n z}{k}\right\}\right)\leq  \exp\left( 2\pi n\right).$$Next, we estimate
\begin{align*}
\left|G\left(\exp\left(\frac{2\pi i H}{k}-\frac{2\pi}{z}\right)\right)\right|.
\end{align*}Using definition \eqref{eq4_1_8} and triangle inequality, we have
\begin{align*}
\left|G\left(\exp\left(\frac{2\pi i H}{k}-\frac{2\pi}{z}\right)\right)\right|&\leq \sum_{m=1}^{\infty}p(m)\left|\exp\left(\frac{2\pi i mH}{k}-\frac{2\pi m}{z}\right) \right|\\
&=\sum_{m=1}^{\infty}p(m) \exp\left( - 2\pi m \text{Re}\,\left\{\frac{1}{z}\right\}\right).
\end{align*}Therefore,
\begin{align*}
\left|\mathcal{G}(z,h,k,n)\right|\leq \frac{2^{1/4}}{k^2\sqrt{N+1} }\exp\left(  2 \pi n \right)
\sum_{m=1}^{\infty}p(m) \exp\left( -2\pi  \left[ m-\frac{1}{24}\right] \text{Re}\,\left\{\frac{1}{z}\right\}\right).
\end{align*}For a point $z\in W$, Theorem \ref{thm4_1_4} says that
$$  x=\text{Re}\,\left\{\frac{1}{z}\right\}\geq\frac{1}{4}.$$ 
Using the fact that $0<p(m_1)<p(m_2)$ when $m_1<m_2$, we find that
\begin{align*}
0&\leq  \sum_{m=1}^{\infty}p(m) \exp\left( -2\pi  \left[ m-\frac{1}{24}\right] \text{Re}\,\left\{\frac{1}{z}\right\}\right)\\
&=\sum_{m=1}^{\infty}p(24m-1)\exp\left(-\frac{\pi x}{12}(24m-1)\right)\\
&\leq  \sum_{k=1}^{\infty}p(k)\exp\left(-\frac{\pi kx}{12}\right)\\
&=F\left(\exp\left(-\frac{\pi  x}{12}\right)\right)-1.
\end{align*}
Since all the coefficients of $F(z)$ are positive, we find that $0<F(a_1)<F(a_2)$ if $0<a_1<a_2$. Hence,
$$F\left(\exp\left(-\frac{\pi  x}{12}\right)\right)-1\leq F\left(e^{-\frac{\pi}{48}}\right)-1=\mathscr{C}_0.$$It follows that
\begin{align*}
\left|\mathcal{G}(z,h,k,n)\right|\leq \mathscr{C}_0\frac{2^{1/4}}{k^2\sqrt{N+1} }\exp\left(2\pi n\right).
\end{align*}Hence,
\begin{align*}
\left|\sum_{k=1}^N\sum_{\substack{1\leq h\leq k\\(h,k)=1 }} \mathcal{B}_{h,k}\right|
&\leq  \mathscr{C}_0\sum_{k=1}^N\sum_{\substack{1\leq h\leq k\\(h,k)=1 }}\frac{2\sqrt{2}k}{N+1}\times\frac{2^{1/4}}{k^2\sqrt{N+1} }\exp\left(2\pi n\right)\\
&\leq  \frac{\mathscr{C}_1}{(N+1)^{3/2}}\sum_{k=1}^N\sum_{\substack{1\leq h\leq k\\(h,k)=1 }}\frac{1}{k},
\end{align*}where $$\mathscr{C}_1=2^{\frac{7}{4}}\mathscr{C}_0\exp\left(2\pi n\right)$$is a fixed constant that only depends on $n$. 
It is easy to see that
\begin{align*}
0&\leq \sum_{k=1}^N\sum_{\substack{1\leq h\leq k\\(h,k)=1 }}\frac{1}{k}\leq   \sum_{k=1}^N 1=N,
\end{align*}since the number of positive integer $h$ such that $1\leq h\leq k$ and $(h,k)=1$ is not larger than $k$.

Hence, we have shown that
$$\left|\sum_{k=1}^N\sum_{\substack{1\leq h\leq k\\(h,k)=1 }} \mathcal{B}_{h,k}\right|
\leq  \frac{\mathscr{C}_1}{\sqrt{N+1}},$$which is of order $N^{-1/2}$. It follows that
\begin{align*}
p(n)=\sum_{k=1}^N\sum_{\substack{1\leq h\leq k\\(h,k)=1\ }} \mathcal{A}_{h,k}+O^*(\mathscr{C}_1N^{-1/2}).
\end{align*}Here we write
\[f(N)=O^*(g(N))\] to mean 
\[|f(N)|\leq |g(N)|.\]
To compute $\mathcal{A}_{h,k}$, we notice that when $z$ traverses the circle $\tilde{C}$ from $w_1(h,k)$ to $w_2(h,k)$ along the arc that is away from the imaginary axis, it traverses clockwise. We can write this arc as the whole circle $-\tilde{C} $ (the minus sign is for the clockwise orientation) minus the two arcs $-\mathcal{W}_1$ and $\mathcal{W}_2$, where $\mathcal{W}_1$ is the arc from 0 to $w_1(h,k)$ which is clockwise, and $\mathcal{W}_2$ is the arc   0 to  $w_2(h,k)$ which is anticlockwise.
This gives,
\begin{align*}
\mathcal{A}_{h,k}&=-\frac{i}{k^{5/2}}\int_{\tilde{C} }e^{\pi i s(h,k)-\frac{2\pi i n h}{k}} z^{\frac{1}{2}}\exp\left(\frac{\pi}{12 z}+\frac{2\pi n z}{k^2}-\frac{\pi z}{12k^2}\right)dz\\&-\frac{i}{k^{5/2}}\int_{\mathcal{W}_1}e^{\pi i s(h,k)-\frac{2\pi i n h}{k}} z^{\frac{1}{2}}\exp\left(\frac{\pi}{12 z}+\frac{2\pi n z}{k^2}-\frac{\pi z}{12k^2}\right)dz\\&+\frac{i}{k^{5/2}}\int_{\mathcal{W}_2}e^{\pi i s(h,k)-\frac{2\pi i n h}{k}} z^{\frac{1}{2}}\exp\left(\frac{\pi}{12 z}+\frac{2\pi n z}{k^2}-\frac{\pi z}{12k^2}\right)dz\\
&=\mathcal{A}_0(h,k)+\mathcal{A}_1(h,k)+\mathcal{A}_2(h,k).
\end{align*}For any $z\in \mathcal{W}_1\cup\mathcal{W}_2$, Proposition \ref{prop4_3_1} and Proposition \ref{prop4_4_1} imply that
\begin{align*}
\left|z^{\frac{1}{2}}\exp\left(\frac{\pi}{12 z}+\frac{2\pi n z}{k^2}-\frac{\pi z}{12k^2}\right)\right|&\leq \frac{2^{1/4}k^{1/2}}{\sqrt{N+1}}\exp\left(\frac{\pi}{12 }\text{Re}\,\left\{\frac{1}{z}\right\}+\frac{2\pi n }{k^2}\text{Re}\,z-\frac{\pi }{12k^2}\text{Re}\,z\right)\\
&\leq \frac{2^{1/4}k^{1/2}}{\sqrt{N+1}} \exp\left(\frac{\pi}{12 } + 2\pi n \right).
\end{align*}

Theorem \ref{thm4_1_4} and Proposition \ref{prop4_4_1} say that $\ell(\mathcal{W}_1)$ and $\ell(\mathcal{W}_2)$ are bounded above by 
$$\frac{\pi}{2}\times\frac{\sqrt{2} k}{N+1}.$$It follows that
\begin{align*}
&\left|\sum_{k=1}^N\sum_{\substack{1\leq h\leq k\\(h,k)=1 }} \left(\mathcal{A}_1(h,k)+\mathcal{A}_2(h,k)\right)\right|\\&\leq  
\sum_{k=1}^N\sum_{\substack{1\leq h\leq k\\(h,k)=1 }}2\times  \frac{\pi}{2}\times\frac{\sqrt{2} k}{N+1}\times \frac{1}{k^{5/2}} \times\frac{2^{1/4}k^{1/2}}{\sqrt{N+1}} \exp\left(\frac{\pi}{12 } + 2\pi n \right)\\
&\leq \frac{\mathscr{C}_2}{\sqrt{N+1}},
\end{align*}where
$$\mathscr{C}_2= 2^{3/4}\pi \exp\left(\frac{\pi}{12 } + 2\pi n \right).$$
This shows that
\begin{align}\label{eq4_24_1}
p(n)=\sum_{k=1}^N\sum_{\substack{1\leq h\leq k\\(h,k)=1 }} \mathcal{A}_{0}(h,k)+O^*((\mathscr{C}_1+\mathscr{C}_2)N^{-1/2}).
\end{align}Taking $N\rightarrow\infty$ limit, we find that
\begin{align*}
p(n)&=\sum_{k=1}^{\infty}\sum_{\substack{1\leq h\leq k\\(h,k)=1 }} \mathcal{A}_{0}(h,k)
\\&=-i\sum_{k=1}^{\infty} \frac{1}{k^{5/2}}A_k(n)\int_{\tilde{C} } z^{\frac{1}{2}}\exp\left\{\frac{\pi}{12 z}+\frac{2\pi   z}{k^2}\left(n-\frac{1}{24}\right)\right\}dz
\end{align*}
where
$$A_k(n)=\sum_{\substack{1\leq h\leq k\\(h,k)=1 }}e^{\pi i s(h,k)-\frac{2\pi i n h}{k}}.$$
In particular, $A_1(n)=1$.
Making a change of variables
$$w=\frac{1}{z},$$ the circle $\tilde{C}$ is mapped to the line $\text{Re}\,w=1$, with $\text{Im}\,w$ goes from $\infty$ to $-\infty$.
Hence,
\begin{align*}
p(n)=-i\sum_{k=1}^{\infty} \frac{1}{k^{5/2}}A_k(n)\int_{\text{Re}\,w=1} w^{-\frac{5}{2}}\exp\left\{\frac{\pi w}{12  }+\frac{2\pi    }{k^2}\left(n-\frac{1}{24}\right)\frac{1}{w}\right\}dw.
\end{align*}
Making a change of variables
$$t=\frac{\pi w}{12},$$ we find that
\begin{align*}
p(n)=-\frac{i\pi^{\frac{3}{2}}}{24\sqrt{3}}\sum_{k=1}^{\infty} \frac{1}{k^{5/2}}A_k(n)\int_{\text{Re}\,t=\frac{\pi}{12}} t^{-\frac{5}{2}}\exp\left(t+\frac{x_k^2}{4t}\right)dt,
\end{align*}
where
\begin{align*}
x_k&=\frac{\pi}{k}\sqrt{\frac{2}{3}\left(n-\frac{1}{24}\right)}.
\end{align*}
By \eqref{eq4_6_2}, we find that
\begin{align*}
p(n)=\sum_{k=1}^{\infty}R_k(n),\end{align*} where
\begin{align*}
R_k(n)=\frac{ \pi^{\frac{5}{2}}}{12\sqrt{3}}  \frac{1}{k^{5/2}}A_k(n) \left(\frac{x_k}{2}\right)^{-\frac{3}{2}}I_{\frac{3}{2}}(x_k).
\end{align*}By Proposition \ref{prop4_13_1}, 
\begin{align*}
R_k(n)&=\frac{ \pi^{\frac{5}{2}}}{12\sqrt{3}}  \frac{1}{k^{5/2}}A_k(n) \left(\frac{x_k}{2}\right)^{-\frac{3}{2}}\sqrt{\frac{2x_k}{\pi}} \frac{1}{\di \frac{dx_k}{dn}}
\frac{d}{dn}\left(\frac{\di\sinh\left\{\frac{\pi}{k}\sqrt{\frac{2}{3}\left(n-\frac{1}{24}\right)}\right\}}{\di\frac{\pi}{k}\sqrt{\frac{2}{3}\left(n-\frac{1}{24}\right)}}\right)\\
&=\frac{1}{\pi\sqrt{2}}A_k(n)\sqrt{k}\frac{d}{dn}\left(\frac{\di\sinh\left\{\frac{\pi}{k}\sqrt{\frac{2}{3}\left(n-\frac{1}{24}\right)}\right\}}{\di\sqrt{n-\frac{1}{24}}}\right).
\end{align*}Notice that \eqref{eq4_24_1} shows that the series
$\di\sum_{k=1}^{\infty}R_k(n)$ indeed converges to  $p(n)$. This completes the proof of the theorem.

\end{proof}
\begin{remark}
Fixed a positive integer $n$. Given a positive integer $N$,  define
$$Q_N(n)=\sum_{k=N+1}^{\infty} R_k(n), $$  so that
$$p(n)=\sum_{k=1}^{N}R_k(n)+Q_N(n).$$ Namely, $Q_N(n)$ is the remainder if we approximate $p(n)$ by the sum of the first $N$ terms in the series \eqref{eq4_24_2}.

By \eqref{eq4_24_1}
\[|Q_N(n)|\leq\frac{\mathscr{C}}{\sqrt{N}},\]where
\begin{align*}
\mathscr{C}=2^{7/4}\left(F(e^{-\frac{\pi}{48}})-1\right)e^{2\pi n}+2^{3/4}\pi\exp\left(\frac{\pi}{12}+2\pi n\right).
\end{align*}Since $Q_N(n)\rightarrow 0$ as $N\rightarrow\infty$, the series \eqref{eq4_24_2} converges absolutely.

\end{remark}

\section{Leading Asymptotic of the Partition Function}
In the following, we   use the exact formula for $p(n)$ we obtained in the previous section to find the leading asymptotic of $p(n)$ as a function of $n$ when $n\rightarrow\infty$.

\begin{theorem}\label{thm8_29_3}The sequence $\{p(n)\}$ satisfies the following asymptotic formula when  $n\rightarrow \infty$. 
\begin{equation}\label{mainterm}\lim_{n\rightarrow\infty}\frac{p(n)}{\di \frac{1}{4n\sqrt{3}}\exp\left(\pi \sqrt{\frac{2n}{3}}\right)}=1.\end{equation}
\end{theorem}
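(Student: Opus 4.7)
The plan is to split the Rademacher series $p(n) = \sum_{k=1}^{\infty}R_k(n)$ into the leading term $R_1(n)$, which I will show is already asymptotic to $P_1(n)$, and a tail $\sum_{k\geq 2}R_k(n)$ which I will bound by something of strictly smaller exponential order.

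For the main term, since $s(1,1)=0$ and only $h=1$ contributes, $A_1(n) = 1$. Writing $c = \pi\sqrt{2/3}$ and $u = \sqrt{n - 1/24}$, and using $du/dn = 1/(2u)$, direct differentiation gives
\[
R_1(n) = \frac{1}{\pi\sqrt{2}}\left(\frac{c\cosh(cu)}{2u^2} - \frac{\sinh(cu)}{2u^3}\right).
\]
As $n\to\infty$, both $\cosh(cu)$ and $\sinh(cu)$ are asymptotic to $e^{cu}/2$, so the dominant contribution is $ce^{cu}/(4\pi\sqrt{2}\,u^2)$. Using $c/(4\pi\sqrt{2}) = 1/(4\sqrt{3})$, $u^2 \sim n$, and $cu - \pi\sqrt{2n/3} \to 0$, this yields $R_1(n) \sim e^{\pi\sqrt{2n/3}}/(4n\sqrt{3}) = P_1(n)$.

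For the tail, the crude bound $\cosh(cu/k),\sinh(cu/k)\leq e^{cu/k}$ applied separately to the two additive pieces of $D_k(n) := \frac{d}{dn}(\sinh(cu/k)/u)$ gives something of order $e^{cu/k}/(ku^2)$, which is not summable in $k$ against $\sqrt{k}|A_k(n)|$. I would instead rewrite
\[
D_k(n) = \frac{c}{2ku^2}\left(\cosh(y) - \frac{\sinh(y)}{y}\right), \qquad y = \frac{cu}{k},
\]
and establish the sharper inequality $\cosh(y) - \sinh(y)/y \leq (y^2/3)\cosh(y)$ by comparing $\sum_{j\geq 1}\frac{2jy^{2j}}{(2j+1)!}$ with $\sum_{j\geq 1}\frac{y^{2j}}{3(2j-2)!}$ term by term, which reduces to $(2j+1)(2j-1)\geq 3$ for $j\geq 1$. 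This gives $|D_k(n)| \leq c^3 e^{cu/k}/(6k^3)$, and combined with the trivial bound $|A_k(n)| \leq \varphi(k) \leq k$ yields $|R_k(n)| \leq Ce^{cu/k}/k^{3/2}$ for an absolute constant $C$.

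Summing, since $e^{cu/k}\leq e^{cu/2}$ for $k\geq 2$ and $\sum_{k\geq 2}k^{-3/2}$ converges, we get $\sum_{k\geq 2}|R_k(n)| \leq C'e^{cu/2}$. Dividing by $P_1(n)\sim e^{cu}/(4n\sqrt{3})$ produces a ratio of order $n\exp(-(\pi/2)\sqrt{2n/3}) \to 0$. Together with $R_1(n)/P_1(n)\to 1$ and the Rademacher formula, this proves $p(n)/P_1(n)\to 1$. The main obstacle is the refined derivative bound: the two additive pieces of $D_k(n)$ each contribute at order $1/k$, and their leading $1/k$ terms cancel exactly, so one must work one order deeper to extract the $1/k^3$ decay that makes the tail summable and of exponential order at most $e^{cu/2}$, strong enough to beat $e^{cu}$ in $P_1(n)$.
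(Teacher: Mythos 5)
Your proposal is correct and takes essentially the same route as the paper: isolate $R_1(n)$, show it is asymptotic to $\frac{1}{4n\sqrt{3}}e^{\pi\sqrt{2n/3}}$, and exploit the cancellation in $y\cosh y-\sinh y$ to obtain $|R_k(n)|\lesssim k^{-3/2}e^{\alpha(n)/k}$, after which summing over $k\geq 2$ gives a tail of order $e^{\alpha(n)/2}$ that is negligible against the main term. The only difference is cosmetic: you prove the key inequality $\cosh y-\frac{\sinh y}{y}\leq\frac{y^{2}}{3}\cosh y$ by term-by-term power series comparison, whereas the paper derives the slightly weaker constant $\frac{1}{2}$ via the Cauchy mean value theorem (Lemma \ref{lemma8_29_2}); either version suffices.
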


Given a positive integer $n$, let
\[L(n)=\frac{1}{4n\sqrt{3}}\exp\left(\pi \sqrt{\frac{2n}{3}}\right).\]
Eq. \eqref{mainterm} asserts that
\[\lim_{n\rightarrow \infty} \frac{p(n)}{L(n)}=1.\]To prove this, let
\[S(n)=\sum_{k=2}^{\infty}R_k(n).\]Then by  \eqref{eq4_24_2},
\[p(n)=R_1(n)+S(n).\]
To prove \eqref{mainterm}, it is sufficient to show that
\begin{equation}\label{eq8_29_8}\lim_{n\rightarrow \infty}\frac{R_1(n)}{L(n)}=1\end{equation} and
\begin{equation}\label{eq8_29_7}\lim_{n\rightarrow\infty}\frac{S(n)}{L(n)}=0.\end{equation}

By a straightforward computation, we find that
\begin{equation}\label{eq8_29_5}
R_k(n) =\frac{\pi \sqrt{k}}{3\sqrt{2}\di\sqrt{n-\frac{1}{24}}}A_k(n)\frac{\di\frac{\alpha(n)}{k}\cosh \frac{\alpha(n)}{k}-\sinh \frac{\alpha(n)}{k}}{\alpha(n)^2},
\end{equation}where
\[\alpha(n)=\pi\sqrt{\frac{2}{3}\left(n-\frac{1}{24}\right)}.\]

We first prove \eqref{eq8_29_8}.

\begin{lemma}
\label{lemma8_29_1}
Let $n$ be a positive integer. Then
\[\lim_{n\rightarrow \infty}\frac{R_1(n)}{L(n)}=1\] 
\end{lemma}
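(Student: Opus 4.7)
The plan is to compute $R_1(n)$ directly from the closed form \eqref{eq8_29_5} and isolate the dominant exponential behaviour. Setting $k=1$, noting that $A_1(n)=1$ since the sum over $h$ with $1\le h\le 1$ and $(h,1)=1$ contributes only the $h=1$ term with $s(1,1)=0$, I would first write
\begin{equation*}
R_1(n)=\frac{\pi}{3\sqrt{2}\sqrt{n-\tfrac{1}{24}}}\cdot\frac{\alpha(n)\cosh\alpha(n)-\sinh\alpha(n)}{\alpha(n)^2}.
\end{equation*}

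Next I would expand the hyperbolic functions via $\cosh\alpha=(e^{\alpha}+e^{-\alpha})/2$ and $\sinh\alpha=(e^{\alpha}-e^{-\alpha})/2$ to obtain
\begin{equation*}
\alpha\cosh\alpha-\sinh\alpha=\frac{(\alpha-1)e^{\alpha}+(\alpha+1)e^{-\alpha}}{2}.
\end{equation*}
Since $\alpha(n)\to\infty$ as $n\to\infty$, the $e^{-\alpha}$ piece is exponentially small relative to the $e^{\alpha}$ piece, so the right-hand side is asymptotic to $(\alpha-1)e^{\alpha}/2$. Using $\alpha^2=\tfrac{2\pi^2}{3}(n-\tfrac{1}{24})\sim\tfrac{2\pi^2}{3}n$ and $(\alpha-1)/\alpha^2\sim 1/\alpha$, I would reduce $R_1(n)$ to
\begin{equation*}
R_1(n)\sim\frac{\pi\,e^{\alpha(n)}}{6\sqrt{2}\,\alpha(n)\sqrt{n-\tfrac{1}{24}}}.
\end{equation*}
Substituting $\alpha(n)\sim\pi\sqrt{2n/3}$ in the denominator and simplifying the constants collapses this expression to $e^{\alpha(n)}/(4n\sqrt{3})$.

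Finally, to conclude \eqref{eq8_29_8} I need to compare $e^{\alpha(n)}$ with $e^{\pi\sqrt{2n/3}}$. The exponent difference
\begin{equation*}
\alpha(n)-\pi\sqrt{\tfrac{2n}{3}}=\pi\sqrt{\tfrac{2}{3}}\Bigl(\sqrt{n-\tfrac{1}{24}}-\sqrt{n}\Bigr)=-\frac{\pi\sqrt{2/3}/24}{\sqrt{n-\tfrac{1}{24}}+\sqrt{n}}
\end{equation*}
tends to $0$, so the ratio $e^{\alpha(n)}/e^{\pi\sqrt{2n/3}}\to 1$. Combining this with the asymptotic computation above gives $R_1(n)/L(n)\to 1$.

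I do not anticipate any serious obstacle; this is a careful but routine asymptotic manipulation, and the only point requiring a moment of attention is verifying that the subleading $e^{-\alpha}$ term, the replacement of $\alpha-1$ by $\alpha$, the replacement of $\sqrt{n-1/24}$ by $\sqrt{n}$, and the exponent drift $\alpha(n)-\pi\sqrt{2n/3}\to 0$ all contribute only multiplicative factors that tend to $1$. Once the constants $\pi$, $\sqrt{2}$, $\sqrt{3}$, and $6$ in the simplification are tracked correctly, the limit works out to exactly $1$, matching $L(n)$.
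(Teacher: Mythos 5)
Your proposal is correct and follows essentially the same route as the paper: starting from \eqref{eq8_29_5} with $A_1(n)=1$, rewriting $\alpha\cosh\alpha-\sinh\alpha$ in terms of $e^{\pm\alpha}$, discarding the exponentially small $e^{-\alpha}$ contribution, and showing the exponent drift $\alpha(n)-\pi\sqrt{2n/3}\to 0$ so that only factors tending to $1$ remain. The constants you track indeed collapse to $e^{\alpha(n)}/(4n\sqrt{3})$, exactly as in the paper's computation.
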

\begin{proof}
By \eqref{eq8_29_5},
\[R_1(n)=\frac{1}{4\sqrt{3}\di\left(n-\frac{1}{24}\right)} \left\{\left(1-\frac{1}{\alpha(n)} \right)e^{\alpha(n)}+\left(1+\frac{1}{\alpha(n)} \right)e^{-\alpha(n)}\right\}.\]
Since 
\[\lim_{n\rightarrow\infty}\alpha(n)=\infty,\]
we find that
\[\lim_{n\rightarrow\infty}\frac{R_1(n)}{\di \frac{1}{4\sqrt{3}\di\left(n-\frac{1}{24}\right)} e^{\alpha(n)}}=1.\]
It follows that
\begin{align*}
\lim_{n\rightarrow \infty}\frac{R_1(n)}{L(n)}&=\lim_{n\rightarrow \infty} 
\frac{\di \frac{1}{4\sqrt{3}\di\left(n-\frac{1}{24}\right)} e^{\alpha(n)}}{\di\frac{1}{4n\sqrt{3}}\exp\left(\pi \sqrt{\frac{2n}{3}}\right)}\\
&=\lim_{n\rightarrow\infty}\exp\left(\pi\sqrt{\frac{2}{3}\left(n-\frac{1}{24}\right)}-\pi \sqrt{\frac{2n}{3}}\right).
\end{align*}
Since
\begin{align*}
&\lim_{n\rightarrow \infty}\left(\pi\sqrt{\frac{2}{3}\left(n-\frac{1}{24}\right)}-\pi \sqrt{\frac{2n}{3}}\right)\\
&=\pi\sqrt{\frac{2}{3}}\lim_{n\rightarrow\infty}\frac{\di n-\frac{1}{24}-n}{\di\sqrt{n-\frac{1}{24}}+\sqrt{n}}\\
&=0,
\end{align*}we conclude that
\[\lim_{n\rightarrow \infty}\frac{R_1(n)}{L(n)}=1.\]
\end{proof}

To prove \eqref{eq8_29_7}, we need the following.
\begin{lemma}
\label{lemma8_29_2}
For $u>0$,
\begin{equation}\label{eq8_29_2}
\left|\frac{u\cosh u-\sinh u}{u^2}\right|\leq  \frac{u\cosh u}{2}.
\end{equation}
\end{lemma}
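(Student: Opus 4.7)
The plan is to drop the absolute value and then control $u\cosh u - \sinh u$ by a direct integral estimate. The key observation is that $(u\cosh u - \sinh u)' = \cosh u + u\sinh u - \cosh u = u\sinh u$, which is strictly positive for $u>0$, and the quantity vanishes at $u=0$. Hence $u\cosh u-\sinh u>0$ for $u>0$, and in particular $\sinh u \leq u\cosh u$, a fact I will reuse at the end.

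Next, I would write $\sinh u = \int_0^u \cosh t\,dt$ and rewrite
\[
u\cosh u - \sinh u \;=\; \int_0^u \bigl(\cosh u - \cosh t\bigr)\,dt.
\]
Because $\cosh$ has derivative $\sinh$, which is increasing on $[0,u]$, the mean value theorem gives $\cosh u - \cosh t \leq (u-t)\sinh u$ for $0\leq t\leq u$. Integrating this pointwise bound yields
\[
u\cosh u - \sinh u \;\leq\; \sinh u \int_0^u (u-t)\,dt \;=\; \frac{u^2}{2}\sinh u.
\]

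Dividing by $u^2$ and invoking $\sinh u \leq u\cosh u$ from the first step produces
\[
\frac{u\cosh u - \sinh u}{u^2} \;\leq\; \frac{\sinh u}{2} \;\leq\; \frac{u\cosh u}{2},
\]
which is precisely \eqref{eq8_29_2}. An equally clean route, if one prefers to avoid the integral argument, is to use the Taylor expansion $u\cosh u - \sinh u = \sum_{k\geq 1} \frac{2k}{(2k+1)!}u^{2k+1}$ and compare termwise with $\tfrac{u\cosh u}{2} = \sum_{k\geq 0}\frac{u^{2k+1}}{2(2k)!}$, which reduces to the elementary inequality $\frac{2}{(2k-1)(2k+1)}\leq 1$ for each $k\geq 1$. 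I do not anticipate any real obstacle here; the only subtlety is recognizing that once the sign is settled, the bound $\sinh u \leq u\cosh u$ already established is what upgrades $\sinh u/2$ to the stated $u\cosh u/2$ on the right-hand side.
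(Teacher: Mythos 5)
Your argument is correct. Where the paper applies the Cauchy mean value theorem to $g(u)=u\cosh u-\sinh u$ and $h(u)=u^2$ to get $\frac{g(u)}{h(u)}=\frac{\sinh v}{2}$ for some $v\in(0,u)$, and then a second mean value theorem plus monotonicity of $\cosh$ to push $\frac{\sinh v}{2}$ up to $\frac{u\cosh u}{2}$, you reach the same intermediate quantity by writing $u\cosh u-\sinh u=\int_0^u(\cosh u-\cosh t)\,dt$ and bounding the integrand by $(u-t)\sinh u$, which gives $\frac{u\cosh u-\sinh u}{u^2}\leq\frac{\sinh u}{2}$ directly; the final upgrade uses $\sinh u\leq u\cosh u$, which you correctly extract from the sign analysis $(u\cosh u-\sinh u)'=u\sinh u>0$ (this also settles the absolute value, a point the paper leaves implicit since its ratio is manifestly $\frac{\sinh v}{2}>0$). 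So the skeleton (ratio $\leq \sinh(\cdot)/2\leq u\cosh u/2$) coincides, but your mechanism replaces the two mean value theorem invocations by an explicit integral estimate, which is slightly more self-contained. Your power-series alternative, comparing $\sum_{k\geq 1}\frac{2k}{(2k+1)!}u^{2k+1}$ termwise against the expansion of $\frac{u^3\cosh u}{2}$ via $\frac{2}{(2k-1)(2k+1)}\leq 1$, is genuinely different from anything in the paper and is arguably the cleanest route, since it needs no calculus at all beyond the series definitions; either version is a complete proof of \eqref{eq8_29_2}.
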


\begin{proof}
Let \[g(u)=u\cosh u-\sinh u,\hspace{1cm}h(u)=u^2.\]
By Cauchy mean value theorem, there exists $v\in (0, u)$ such that
\[\frac{u\cosh u-\sinh u}{u^2}=\frac{g(u)}{h(u)}=\frac{g'(v)}{h'(v)}.\]
Now,
\begin{align*}
g'(u)=u\sinh u,\quad h'(u)=2u.
\end{align*}
Thus,
\begin{align*}
\frac{u\cosh u-\sinh u}{u^2}&=\frac{\sinh v}{2}
\end{align*}
By mean value theorem, there exists $w\in (0, v)$ such that
\[\frac{\sinh v}{v}=\cosh w.\]
Since $\cosh u$ is a strictly increasing function, $\cosh w\leq \cosh v\leq \cosh u$. Thus,
\begin{align*}
\frac{u\cosh u-\sinh u}{u^2}&=\frac{\sinh v}{2}\\
&\leq \frac{v\cosh w}{2}\\
&\leq\frac{u\cosh u}{2}.
 \end{align*}
The assertion follows.
\end{proof}

 \begin{proof}[Proof of Theorem \ref{thm8_29_3}]
To complete the proof of Theorem \ref{thm8_29_3}, it remains to prove \eqref{eq8_29_7}. 

By \eqref{eq8_29_2} and \eqref{eq8_29_5}, and using the fact that $|A_k(n)|\leq k$, we have
\begin{equation*}
|R_k(n)|\leq \frac{\pi \alpha(n)}{6\sqrt{2}\di\sqrt{n-\frac{1}{24}}}\frac{1}{k^{3/2}}\cosh\frac{\alpha(n)}{k}.
\end{equation*}Using the fact that $\cosh u<e^u$ when $u>0$, we have
\begin{equation*}
|R_k(n)| \leq \frac{\pi^2}{6\sqrt{3} }\frac{1}{k^{3/2}}\exp\left(\frac{\alpha(n)}{k}\right).
\end{equation*}
Hence,
\begin{align*}
|S(n)|&\leq\sum_{k=2}^{\infty}|R_k(n)|\\
&\leq \sum_{k=2}^{\infty}\frac{\pi^2}{6\sqrt{3} }\frac{1}{k^{3/2}}\exp\left(\frac{\alpha(n)}{k}\right)\\
&\leq \frac{\pi^2}{6\sqrt{3} }\exp\left(\frac{\alpha(n)}{2}\right)\sum_{k=2}^{\infty}\frac{1}{k^{3/2}}\\
&=C\frac{\pi^2}{6\sqrt{3} }\exp\left(\frac{\alpha(n)}{2}\right),
\end{align*}where
\[C=\zeta\left(\frac{3}{2}\right)-1.\]
It follows that
\begin{align*}
\left|\frac{S(n)}{L(n)}\right|&\leq \frac{2C\pi^2 n}{3  }\exp\left(\frac{\pi}{2}\sqrt{\frac{2}{3}\left(n-\frac{1}{24}\right)}-\pi\sqrt{\frac{2n}{3}}\right)\\
&\leq \frac{2C\pi^2 n}{3  }\exp\left(\frac{\pi}{2}\sqrt{\frac{2}{3}n}-\pi\sqrt{\frac{2n}{3}}\right)\\
&=\frac{2C\pi^2 n}{3  }\exp\left(-\frac{\pi}{2}\sqrt{\frac{2}{3}n}\right).
\end{align*}
It is easy to verify that
\[\lim_{n\rightarrow\infty}\frac{2C\pi^2 n}{3  }\exp\left(-\frac{\pi}{2}\sqrt{\frac{2}{3}n}\right)=\frac{2C\pi^2  }{3  }\lim_{n\rightarrow\infty}\frac{n}{\di \exp\left(\frac{\pi}{2}\sqrt{\frac{2}{3}n}\right)}=0.\]
Hence, 
\[\lim_{n\rightarrow\infty}\frac{S(n)}{L(n)}=0.\]
This completes the proof of \eqref{mainterm}.

\end{proof}

Theorem \ref{thm8_29_3} states that  the ratio of $p(n)$ to $L(n)$ approaches 1 when $n$ approaches infinity. This does not mean when we use $L(n)$ to approximate $p(n)$, the error in the approximation, defined as $p(n)-L(n)$, is small. In fact, this error also tends to infinity when $n$ approaches infinity. What we assert is that
the relative error
\[e(n)=\frac{p(n)-L(n)}{p(n)}\]  goes to 0. The percentage relative error is  defined as
\[\varepsilon(n)=\frac{p(n)-L(n)}{p(n)}\times 100\%.\]

Table \ref{table1} shows some values of $p(n)$ as well as the  percentage relative errors in the approximation of $p(n)$ by $L(n)$. Figure \ref{figure11} depicts the  percentage relative errors graphically.  

\begin{figure}[h]
\centering
\includegraphics[scale=0.85]{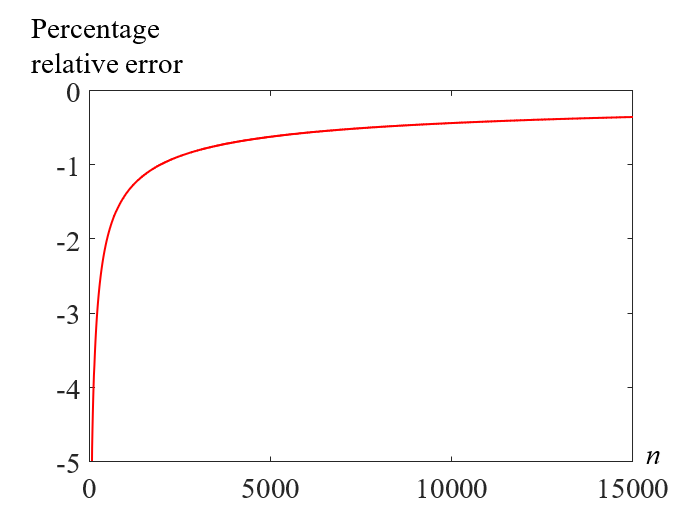}

\caption{The percentage relative error in using $L(n)$ to approximate $p(n)$.\fa}\label{figure11}
\end{figure}
\vfill\pagebreak

\renewcommand{\arraystretch}{1.5}

\begin{center}
\begin{tabularx}
{0.9\textwidth}{
||>{\arraybackslash\hsize=0.6\hsize}c|
>{\arraybackslash\hsize=0.7\hsize}p{20.2cm}||
}
\hline\hline
$n$	&	$p(n)$	 \\
\hline\hline
   10 &                                                                                                                               42\\
\hline
  50&           204226\\
\hline
  100 &        190569292\\\hline
  200 &        3972999029388\\
\hline
  500 &        2300165032574323995027\\
\hline
 1000 &         24061467864032622473692149727991\\
\hline
 2000 &         4720819175619413888601432406799959512200344166\\
\hline
 3000 &         496025142797537184410324879054927095334462742231683423624\\
\hline
 4000 &    1024150064776551375119256307915896842122498030313150910234889093895\\
\hline
 5000 &                           1698201688254421218519751016893064313617576830498292333222038246523\newline 29144349\\
\hline
 6000 & 4671727531970209092971024643973690643364629153270037033856605528925\newline 072405349246129\\
\hline
 7000 & 3285693080344061578628092563592416686195015157453224065969903215743\newline 2236394 
374450791229199\\
\hline
 8000 & 7836026435156834949059314501336459971901076935298586433111860020941\newline 7827764524450990388402844164\\
\hline
 9000& 7713363811780888490732079142740313496163979832207203426264771369460\newline 5367979684296948790335590435626459\\
\hline
10000& 3616725132563629398882047189095369549501603033931565042208186860588\newline
7952568754066420592310556052906916435144\\
\hline
12000&              1294107667757322067493842620367467386268131006205640080126511905905\newline 0170600581269291250270699
01623662251809128853180610\\
\hline
 15000  &2626337936403790841371023191659066988029320559654372494065885879713\newline 75120081791056718639088570913175942816125969709246029351672130266\\
 
\hline\hline
\caption{Selected values of $p(n)$.\fa}
\label{table2}
\end{tabularx}

\end{center}

\vfill\pagebreak
\begin{table}[h]

\renewcommand{\arraystretch}{1.5}

\begin{tabular}{||c|c|c||}
\hline
\hline
$n$	&	$p(n)$	&  percentage relative error $\varepsilon(n)$\\
\hline\hline
10	&$	42	$&	$-14.53$	\\ \hline
50	&$	204226	$&	$-6.54$	\\ \hline
100	&$	190569292	$&	$-4.57$	\\ \hline
200	&$	3.97\times 10^{12}	$&	$-3.2$	\\ \hline
500	&$	2.30 \times 10^{21}	$&	$-2.01$	\\ \hline
1000	&$	2.41\times 10^{31}	$&	$-1.42$	\\ \hline
2000	&$	4.72\times 10^{45}	$&	$-1$	\\ \hline
3000	&$	4.96\times 10^{56}	$&	$-0.81$	\\ \hline
4000	&$	1.02\times 10^{66}	$&	$-0.7$	\\ \hline
5000	&$	1.70\times 10^{74}	$&	$-0.63$	\\ \hline
6000	&$	4.67\times 10^{81}	$&	$-0.57$	\\ \hline
7000	&$	3.29\times 10^{88}	$&	$-0.53$	\\ \hline
8000	&$	7.84\times 10^{94	}$&	$-0.5$	\\ \hline
9000	&$	7.71\times 10^{100}	$&	$-0.47$	\\ \hline
10000	&$	3.62\times 10^{106}	$&	$-0.44$	\\ \hline
12000	&$	1.29\times 10^{117}	$&	$-0.41$	\\ \hline
$\quad 15000\quad$	&$\quad	2.63\times 10^{131}	\quad$&	$-0.36$	\\ \hline
\hline
\end{tabular}

\vspace{0.5cm}

\caption{The values of $p(n)$ and the percentage relative errors in using $L(n)$ to approximate $p(n)$.\fa}
 \label{table1}
\end{table}
 
\vfill\pagebreak

\bibliographystyle{amsalpha}
\bibliography{ref}

\providecommand{\bysame}{\leavevmode\hbox to3em{\hrulefill}\thinspace}
\providecommand{\MR}{\relax\ifhmode\unskip\space\fi MR }
% \MRhref is called by the amsart/book/proc definition of \MR.
\providecommand{\MRhref}[2]{%
  \href{http://www.ams.org/mathscinet-getitem?mr=#1}{#2}
}
\providecommand{\href}[2]{#2}
\begin{thebibliography}{KMT19}

\bibitem[Apo90]{Apostol_2}
Tom~M. Apostol, \emph{Modular functions and {D}irichlet series in number
  theory}, second ed., Graduate Texts in Mathematics, vol.~41, Springer-Verlag,
  New York, 1990. \MR{1027834}

\bibitem[AW95]{Almkvist1995}
Gert Almkvist and Herbert~S. Wilf, \emph{On the coefficients in the
  {H}ardy-{R}amanujan-{R}ademacher formula for {$p(n)$}}, J. Number Theory
  \textbf{50} (1995), no.~2, 329--334. \MR{1316828}

\bibitem[BL13]{Barber2013}
Ben Barber and Imre Leader, \emph{Partition regularity with congruence
  conditions}, J. Comb. \textbf{4} (2013), no.~3, 293--297. \MR{3096137}

\bibitem[BM20]{Brassesco2020}
Stella Brassesco and Arnaud Meyroneinc, \emph{An expansion for the number of
  partitions of an integer}, Ramanujan J. \textbf{51} (2020), no.~3, 563--592.
  \MR{4076172}

\bibitem[Bri06]{Bringmann2006}
Kathrin Bringmann, \emph{Asymptotic formulas for some restricted partition
  functions}, Ramanujan J. \textbf{12} (2006), no.~2, 257--266. \MR{2286249}

\bibitem[BTB11]{Tani2011}
Nesrine Benyahia~Tani and Sadek Bouroubi, \emph{Enumeration of the partitions
  of an integer into parts of a specified number of different sizes and
  especially two sizes}, J. Integer Seq. \textbf{14} (2011), no.~3, Article
  11.3.6, 12. \MR{2783952}

\bibitem[Cra22]{Craig_2022}
William Craig, \emph{On the number of parts in congruence classes for
  partitions into distinct parts}, Res. Number Theory \textbf{8} (2022), no.~3,
  Paper No. 52, 24. \MR{4456287}

\bibitem[dAP00]{Pribitkin2000}
Wladimir de~Azevedo~Pribitkin, \emph{Revisiting {R}ademacher's formula for the
  partition function {$p(n)$}}, Ramanujan J. \textbf{4} (2000), no.~4,
  455--467. \MR{1811909}

\bibitem[DM13]{Dewar2013}
Michael Dewar and M.~Ram Murty, \emph{A derivation of the {H}ardy-{R}amanujan
  formula from an arithmetic formula}, Proc. Amer. Math. Soc. \textbf{141}
  (2013), no.~6, 1903--1911. \MR{3034417}

\bibitem[Gro58]{Grosswald1958}
Emil Grosswald, \emph{Some theorems concerning partitions}, Trans. Amer. Math.
  Soc. \textbf{89} (1958), 113--128. \MR{97371}

\bibitem[Gro60a]{Grosswald1960}
E.~Grosswald, \emph{Partitions into prime powers}, Michigan Math. J. \textbf{7}
  (1960), 97--122. \MR{121348}

\bibitem[Gro60b]{Grosswald1960_2}
Emil Grosswald, \emph{Correction and addition to ``{S}ome theorems concerning
  partitions''}, Trans. Amer. Math. Soc. \textbf{95} (1960), 190. \MR{120212}

\bibitem[Gro62]{Grosswald1962}
E.~Grosswald, \emph{Results, new and old, in the theory of partitions}, Rev.
  Un. Mat. Argentina \textbf{20} (1962), 40--57. \MR{150119}

\bibitem[Gro63]{Grosswald1963}
\bysame, \emph{Elementary proofs in the theory of partitions}, Math. Z.
  \textbf{81} (1963), 52--61. \MR{148623}

\bibitem[Gro84]{Grosswald1984}
Emil Grosswald, \emph{Partitions into squares}, Enseign. Math. (2) \textbf{30}
  (1984), no.~3-4, 223--245. \MR{767902}

\bibitem[Hag62]{Hagis1962}
Peter Hagis, Jr., \emph{A problem on partitions with a prime modulus {$p\geq
  3$}}, Trans. Amer. Math. Soc. \textbf{102} (1962), 30--62. \MR{146166}

\bibitem[Hag63]{Hagis1963}
\bysame, \emph{Partitions into odd summands}, Amer. J. Math. \textbf{85}
  (1963), 213--222. \MR{153651}

\bibitem[Hag64a]{Hagis1964}
\bysame, \emph{On a class of partitions with distinct summands}, Trans. Amer.
  Math. Soc. \textbf{112} (1964), 401--415. \MR{166177}

\bibitem[Hag64b]{Hagis1964_2}
\bysame, \emph{Partitions into odd and unequal parts}, Amer. J. Math.
  \textbf{86} (1964), 317--324. \MR{166178}

\bibitem[Hag65a]{Hagis1965}
\bysame, \emph{A correction of some theorems on partitions}, Trans. Amer. Math.
  Soc. \textbf{118} (1965), 550. \MR{176970}

\bibitem[Hag65b]{Hagis1965_2}
\bysame, \emph{On the partitions of an integer into distinct odd summands},
  Amer. J. Math. \textbf{87} (1965), 867--873. \MR{188182}

\bibitem[Hag65c]{Hagis1965_3}
\bysame, \emph{Partitions with odd summands---some comments and corrections},
  Amer. J. Math. \textbf{87} (1965), 218--220. \MR{176969}

\bibitem[Hag66]{Hagis1966}
\bysame, \emph{Some theorems concerning partitions into odd summands}, Amer. J.
  Math. \textbf{88} (1966), 664--681. \MR{200260}

\bibitem[Hag70]{Hagis1970}
\bysame, \emph{A root of unity occurring in partition theory}, Proc. Amer.
  Math. Soc. \textbf{26} (1970), 579--582. \MR{265308}

\bibitem[Hag71a]{Hagis1971}
\bysame, \emph{Partitions into unequal parts satisfying certain congruence
  conditions}, J. Number Theory \textbf{3} (1971), 115--123. \MR{268145}

\bibitem[Hag71b]{Hagis1971_2}
\bysame, \emph{Partitions with a restriction on the multiplicity of the
  summands}, Trans. Amer. Math. Soc. \textbf{155} (1971), 375--384. \MR{272735}

\bibitem[HR18]{Hardy1918}
G.~H. Hardy and S.~Ramanujan, \emph{Asymptotic {F}ormulaae in {C}ombinatory
  {A}nalysis}, Proc. London Math. Soc. (2) \textbf{17} (1918), 75--115.
  \MR{1575586}

\bibitem[Hua42]{Hua1942}
Loo-keng Hua, \emph{On the number of partitions of a number into unequal
  parts}, Trans. Amer. Math. Soc. \textbf{51} (1942), 194--201. \MR{6195}

\bibitem[IJT20]{Iskander2020}
Jonas Iskander, Vanshika Jain, and Victoria Talvola, \emph{Exact formulae for
  the fractional partition functions}, Res. Number Theory \textbf{6} (2020),
  no.~2, Paper No. 20, 17. \MR{4092503}

\bibitem[Ise57]{Iseki_1957}
Sh\^{o} Iseki, \emph{The transformation formula for the {D}edekind modular
  function and related functional equations}, Duke Math. J. \textbf{24} (1957),
  653--662. \MR{91301}

\bibitem[Ise59]{Iseki1959}
\bysame, \emph{A partition function with some congruence condition}, Amer. J.
  Math. \textbf{81} (1959), 939--961. \MR{108473}

\bibitem[Ise60]{Iseki1960_2}
\bysame, \emph{On some partition functions}, J. Math. Soc. Japan \textbf{12}
  (1960), 81--88. \MR{113861}

\bibitem[Joh12]{Johansson2012}
Fredrik Johansson, \emph{Efficient implementation of the
  {H}ardy-{R}amanujan-{R}ademacher formula}, LMS J. Comput. Math. \textbf{15}
  (2012), 341--359. \MR{2988821}

\bibitem[JS15]{Jennings2015}
Chris Jennings-Shaffer, \emph{Congruences for partition pairs with conditions},
  Q. J. Math. \textbf{66} (2015), no.~3, 837--860. \MR{3396094}

\bibitem[KMT19]{Khaochim2019}
Narissara Khaochim, Riad Masri, and Wei-Lun Tsai, \emph{An effective bound for
  the partition function}, Res. Number Theory \textbf{5} (2019), no.~1, Paper
  No. 14, 25. \MR{3903861}

\bibitem[KT23]{Kong_Teo_1}
Z.Y. Kong and L.P. Teo, \emph{An elementary proof of the transformation formula
  for the {D}edekind eta function}, Preprint arXiv:2302.03280 (2023).

\bibitem[KY99]{Kim1999}
Dongsu Kim and Ae~Ja Yee, \emph{A note on partitions into distinct parts and
  odd parts}, Ramanujan J. \textbf{3} (1999), no.~2, 227--231. \MR{1703261}

\bibitem[Leh37]{Lehmer1937}
D.~H. Lehmer, \emph{On the {H}ardy-{R}amanujan {S}eries for the {P}artition
  {F}unction}, J. London Math. Soc. \textbf{12} (1937), no.~3, 171--176.
  \MR{1575064}

\bibitem[Liv45]{Livingood1945}
John Livingood, \emph{A partition function with the prime modulus {$P>3$}},
  Amer. J. Math. \textbf{67} (1945), 194--208. \MR{12101}

\bibitem[MLP12]{McLaughlin2012}
James Mc~Laughlin and Scott Parsell, \emph{A
  {H}ardy-{R}amanujan-{R}ademacher-type formula for {$(r,s)$}-regular
  partitions}, Ramanujan J. \textbf{28} (2012), no.~2, 253--271. \MR{2925178}

\bibitem[Niv40]{Niven1940}
Ivan Niven, \emph{On a certain partition function}, Amer. J. Math. \textbf{62}
  (1940), 353--364. \MR{1235}

\bibitem[O'S20]{OSullivan2020}
Cormac O'Sullivan, \emph{Rademacher's conjecture and expansions at roots of
  unity of products generating restricted partitions}, J. Number Theory
  \textbf{216} (2020), 335--379. \MR{4130086}

\bibitem[Pri09]{Pribitkin2009}
Wladimir de~Azevedo Pribitkin, \emph{Simple upper bounds for partition
  functions}, Ramanujan J. \textbf{18} (2009), no.~1, 113--119. \MR{2471621}

\bibitem[PW19]{Pribitkin2019}
Wladimir de~Azevedo Pribitkin and Brandon Williams, \emph{Short proof of
  {R}ademacher's formula for partitions}, Res. Number Theory \textbf{5} (2019),
  no.~2, Paper No. 17, 6. \MR{3916267}

\bibitem[Rad37]{Rademacher1937}
Hans Rademacher, \emph{On the {P}artition {F}unction p(n)}, Proc. London Math.
  Soc. (2) \textbf{43} (1937), no.~4, 241--254. \MR{1575213}

\bibitem[Rad40]{Rademacher1940}
\bysame, \emph{Fourier expansions of modular forms and problems of partition},
  Bull. Amer. Math. Soc. \textbf{46} (1940), 59--73. \MR{842}

\bibitem[Rad43]{Rademacher1943}
\bysame, \emph{On the expansion of the partition function in a series}, Ann. of
  Math. (2) \textbf{44} (1943), 416--422. \MR{8618}

\bibitem[Rob76]{Robertson1976_2}
M.~M. Robertson, \emph{Partitions with congruence conditions}, Proc. Amer.
  Math. Soc. \textbf{57} (1976), no.~1, 45--49. \MR{404184}

\bibitem[Rob77]{Robertson1977}
\bysame, \emph{Partitions of large multipartites with congruence conditions.
  {II}}, J. Number Theory \textbf{9} (1977), no.~2, 258--270. \MR{444596}

\bibitem[RS76]{Robertson1976}
M.~M. Robertson and D.~Spencer, \emph{Partitions of large multipartites with
  congruence conditions. {I}}, Trans. Amer. Math. Soc. \textbf{219} (1976),
  299--322. \MR{401688}

\bibitem[Sel89]{Selberg1989}
Atle Selberg, \emph{The history of {R}ademacher's formula for the partition
  function {$p(n)$}}, Normat \textbf{37} (1989), no.~4, 141--146, 176.
  \MR{1042708}

\bibitem[Sil10a]{Sills2010_2}
Andrew~V. Sills, \emph{A {R}ademacher type formula for partitions and
  overpartitions}, Int. J. Math. Math. Sci. (2010), Art. ID 630458, 21.
  \MR{2606900}

\bibitem[Sil10b]{Sills2010}
\bysame, \emph{Rademacher-type formulas for restricted partition and
  overpartition functions}, Ramanujan J. \textbf{23} (2010), no.~1-3, 253--264.
  \MR{2739216}

\bibitem[Sil10c]{Sills2010_3}
\bysame, \emph{Towards an automation of the circle method}, Gems in
  experimental mathematics, Contemp. Math., vol. 517, Amer. Math. Soc.,
  Providence, RI, 2010, pp.~321--338. \MR{2731079}

\bibitem[Spe73]{Spencer1973}
D.~Spencer, \emph{Partitions of large multipartites with congruence
  conditions}, ProQuest LLC, Ann Arbor, MI, 1973, Thesis (Ph.D.)--University of
  Surrey (United Kingdom). \MR{3809911}

\bibitem[Sub72]{Subrahmanyasastri1972}
V.~V. Subrahmanyasastri, \emph{Partitions with congruence conditions}, J.
  Indian Math. Soc. (N.S.) \textbf{36} (1972), 177--194 (1973). \MR{325560}

\bibitem[SZ12]{Sills2012}
Andrew~V. Sills and Doron Zeilberger, \emph{Formul\ae for the number of
  partitions of {$n$} into at most {$m$} parts (using the quasi-polynomial
  ansatz)}, Adv. in Appl. Math. \textbf{48} (2012), no.~5, 640--645.
  \MR{2920836}

\bibitem[Usp20]{Uspensky1920}
J.V. Uspensky, \emph{Asymptotic {F}ormulae for {N}umerical {F}unctions which
  {O}ccur in the {T}heory of {P}artitions [{R}ussian]}, Bull. Acad. Sci. URSS
  \textbf{14} (1920), 199--218.

\bibitem[Van82]{Vangipuram1982_2}
S.~Vangipuram, \emph{Partitions with congruence conditions and color
  restrictions}, Number theory ({M}ysore, 1981), Lecture Notes in Math., vol.
  938, Springer, Berlin-New York, 1982, pp.~157--169. \MR{665446}

\bibitem[Vau81]{Vaughan1981}
R.~C. Vaughan, \emph{The {H}ardy-{L}ittlewood method}, Cambridge Tracts in
  Mathematics, vol.~80, Cambridge University Press, Cambridge-New York, 1981.
  \MR{628618}

\bibitem[VSS82]{Vangipuram1982}
S.~Vangipuram and V.~V. Subrahmanya~Sastri, \emph{A problem on partitions with
  congruence conditions}, Indian J. Math. \textbf{24} (1982), no.~1-3,
  165--174. \MR{724333}

\bibitem[Zag21]{Zagier2021}
Don Zagier, \emph{Power partitions and a generalized eta transformation
  property}, Hardy-Ramanujan J. \textbf{44} (2021), 1--18. \MR{4400882}

\end{thebibliography}
\end{document}